\documentclass[a4paper, 10pt, notitlepage]{article}

\usepackage{amsthm, amsmath, amssymb, latexsym, eufrak}
\usepackage{mathrsfs,cite}
\usepackage[multiple]{footmisc}
\usepackage{graphicx}
\usepackage[ruled,vlined]{algorithm2e}

\theoremstyle{plain}
\newtheorem{theorem}{Theorem}
\newtheorem{lemma}{Lemma}
\newtheorem{corollary}{Corollary}
\newtheorem{proposition}{Proposition}



%

\theoremstyle{definition}

\newtheorem{example}{Example}

\theoremstyle{remark}
\newtheorem{remark}{Remark}

\DeclareMathOperator{\co}{co}
\DeclareMathOperator{\determ}{det}
\DeclareMathOperator{\trace}{Tr}
\DeclareMathOperator{\sign}{sign}
\DeclareMathOperator{\diag}{diag}

\author{M.V. Dolgopolik\footnote{Institute for Problems in Mechanical Engineering of the Russian Academy of Sciences, 
Saint Petersburg, Russia}}
\title{Subdifferentials of convex matrix-valued functions}

\begin{document}

\maketitle

\begin{abstract}
Subdifferentials (in the sense of convex analysis) of matrix-valued functions defined on $\mathbb{R}^d$ that are convex
with respect to the L\"{o}wner partial order can have a complicated structure and might be very difficult to compute
even in simple cases. The aim of this paper is to study subdifferential calculus for such functions and properties of
their subdifferentials. We show that many standard results from convex analysis no longer hold true in the matrix-valued
case. For example, in this case the subdifferential of the sum is not equal to the sum of subdifferentials, the Clarke
subdifferential is not equal to the subdifferential in the sense of convex analysis, etc. Nonetheless, it is possible to
provide simple rules for computing nonempty subsets of subdifferentials (in particular, individual subgradients) of
convex matrix-valued functions in the general case and to completely describe subdifferentials of such functions defined
on the real line. As a by-product of our analysis, we derive some interesting properties of convex matrix-valued
functions, e.g. we show that if such function is nonsmooth, then its diagonal elements must be nonsmooth as well.
\end{abstract}

\section{Introduction}

Matrix-valued functions that are convex with respect to the L\"{o}wner partial order naturally arise in the context of
semidefinite programming and semidefinite complementarity problems
\cite{Dolgopolik_DC_Semidef_I,Dolgopolik_DC_Semidef_II,ChenQiTseng,SunSun} and play an important role in the design of
certain classes of numerical methods for solving these problems. Although basic properties of such mappings have been
thoroughly investigated (see \cite{HansenTomiyama,BrinkhuisLuoZhang} and the references therein), relatively little
attention has been paid to analysis of differentiability properties of such mappings in the general nonsmooth case. 

A detailed analysis of nonsmooth matrix-valued functions mapping the space of symmetric matrices into itself was
presented in \cite{ChenQiTseng}, while semismoothness of certain important matrix-valued functions arising in
applications have been proved in \cite{SunSun}. However, to the best of the author's knowledge, subdifferentials of
nonsmooth convex matrix-valued functions have not been properly studied before.

It should be noted that general theory of convex mappings with values in normed/vector spaces, developed, e.g. in 
\cite{Papageorgiou,Thera,KusraevKutateladze}, cannot be applied to convex matrix-valued functions, since the space of
symmetric matrices endowed with the L\"{o}wner partial order is \textit{not} a vector lattice by the famous Kadison's
theorem \cite{Kadison} and is not normal (see \cite{Papageorgiou,Thera}). The lack of these properties does not allow
one to extend and apply many well-developed techniques of convex analysis to the study of convex matrix-valued
functions. Therefore, different techniques for analysing such functions must be developed.

The main goal of this article is to present a partial extension of the subdifferential calculus from convex analysis to
the matrix-valued case and provide some simple rules for computing nonempty subsets (in particular, individual
subgradients) of nonsmooth convex matrix-valued functions.

We show that many standard results from convex analysis no longer hold true in the matrix-valued case, e.g. 
the subdifferential of the sum is not equal to the sum of the subdifferentials, the Clarke subdifferential is not equal
to the subdifferential in the sense of convex analysis, etc. Nonetheless, we prove that subdifferentials of convex
matrix-valued functions possess many properties of their real-valued counterparts. We also provide a complete
characterisation of subdifferentials of convex matrix-valued functions defined on the real line and present a partial
extension of the subdiferential calculus to the matrix-valued case that allows one to compute nonempty subsets of
subdifferentials of many nonsmooth convex matrix-valued functions in a straightforward manner. 

Finally, as a by-product of our analysis we derive some differentiability properties of nonsmooth convex matrix-valued
functions that have no direct analogues in the real-valued case. In particular, we show that diagonal matrix-valued
functions can have subgradients that consist of non-diagonal matrices and prove that nonsmooth convex matrix-valued
functions must have nonsmooth diagonal elements.

The paper is organised as follows. Some auxiliary definitions and results related to the L\"{o}wner partial order and
convex matrix-valued functions are collected in Section~\ref{sect:Prelim}. General properties of subdifferentials of
such functions are studied in Section~\ref{sect:GeneralProperties}. A characterisation of subdifferentials of convex
matrix-valued functions defined on the real line is presented in Section~\ref{sect:OneDimCase}, while a relationship
between the Clarke subdifferential and the subdifferential in the sense of convex analysis is studied in
Section~\ref{sect:MultiDimCase}. Finally, Section~\ref{sect:SubdiffCalc} is devoted to a partial extension of the
subdifferential calculus from convex analysis to the matrix-valued case.

\section{Preliminaries}
\label{sect:Prelim}

Let us recall some auxiliary definitions and results that will be utilised throughout the article. The space of all
real symmetric matrices of order $\ell \in \mathbb{N}$ is denoted by $\mathbb{S}^{\ell}$. We endow this space with 
the standard inner product $\langle A, B \rangle = \trace(AB)$ for all $A, B \in \mathbb{S}^{\ell}$ and 
the corresponding norm $\| \cdot \|_F$, which is called the Frobenius norm. Here $\trace(A)$ is the trace of a square
matrix $A$.

Denote by $\preceq$ the L\"{o}wner partial order on the space $\mathbb{S}^{\ell}$, which is defined as follows:
$A \preceq B$ for some matrices $A, B \in \mathbb{S}^{\ell}$ if and only if the matrix $B - A$ is positive semidefinite.
It is worth explicitly noting that by the definition of the L\"{o}wner partial order for any matrices 
$A, B \in \mathbb{S}^{\ell}$ the following two conditions are equivalent:
\begin{equation} \label{eq:LownerOrderViaQuadForm}
  A \preceq B \quad \Longleftrightarrow \quad 
  \langle z, (B - A) z \rangle \ge 0 \quad \forall z \in \mathbb{R}^{\ell},
\end{equation}
where $\langle \cdot, \cdot \rangle$ is the inner product in $\mathbb{R}^{\ell}$. 

It is readily seen that the L\"{o}wner partial order enjoys the following properies:
\begin{equation} \label{eq:LoewnerOrderProp}
\begin{split}
  &A_1 \preceq B_1 \Longleftrightarrow - B_1 \preceq - A_1, \quad
  A_1 \preceq B_1 \implies A_1 + E \preceq B_1 + E
  \\
  &A_1 \preceq B_1 \quad \& \quad A_2 \preceq B_2 \implies A_1 + A_2 \preceq B_1 + B_2,
  \\
  &A_1 \preceq B_1 \implies t A_1 \preceq t B_1 \quad \forall t \ge 0.
\end{split}
\end{equation}
for any matrices $A_i, B_i, E \in \mathbb{S}^{\ell}$. Moreover, from \eqref{eq:LownerOrderViaQuadForm} it obviously
follows that if $A_n \preceq B_n$ for any $n \in \mathbb{N}$, and $A_n \to A$ and $B_n \to B$ in $\mathbb{S}^{\ell}$ as
$n \to \infty$, then $A \preceq B$, that is, one can pass to the limit in matrix inequalities.

Denote by $\mathbb{S}^{\ell}_+ = \{ A \in \mathbb{S}^{\ell} \mid A \succeq \mathbb{O}_{\ell \times \ell} \}$ the cone of
all positive semidefinite matrices, where $\mathbb{O}_{n \times m}$ is the zero matrix of order $n \times m$. The cone
of negative semidefinite matrices is denoted by $\mathbb{S}^{\ell}_-$. 
Note that $\mathbb{S}^{\ell}_- = - \mathbb{S}^{\ell}_+$.

\begin{lemma} \label{lem:BoundedOrderBall}
For any bounded set $K \subset \mathbb{S}^{\ell}$ the set 
$\mathbb{B}_K = (K + \mathbb{S}^{\ell}_+) \cap (K + \mathbb{S}^{\ell}_-)$ is bounded.
\end{lemma}

\begin{proof}
Let $C > 0$ be such that $\| X \|_F \le C$ for all $X \in K$. Choose any $X \in \mathbb{B}_K$. Then by definition there
exist $X_1, X_2 \in K$, $Y \in \mathbb{S}^{\ell}_+$, and $Z \in \mathbb{S}^{\ell}_-$ such that 
$X = X_1 + Y = X_2 + Z$. Hence
\[
  \langle X, Z \rangle = \langle X_1, Z \rangle + \langle Y, Z \rangle = \langle X_2, Z \rangle + \| Z \|_F^2.
\]
Note that $\langle Y, Z \rangle \le 0$, since, as is well known and easy to check, $\langle A, B \rangle \le 0$ for any
$A \in \mathbb{S}^{\ell}_+$ and $B \in \mathbb{S}^{\ell}_-$. Consequently, one has
\[
  \| Z \|_F^2 = \langle X_1 - X_2, Z \rangle + \langle Y, Z \rangle 
  \le \| X_1 - X_2 \|_F \| Z \|_F \le 2 C \| Z \|_F.
\]
Therefore, $\| Z \|_F \le 2 C$ and $\| X \|_F = \| X_2 + Z \|_F \le 3 C$, that is, the set $\mathbb{B}_K$ is bounded.
\end{proof}

Let $F \colon \mathbb{R}^d \to \mathbb{S}^{\ell}$ be a given matrix-valued function. Recall that $F$ is called convex
(with respect to the L\"{o}wner partial order), if
\[
  F(\alpha x_1 + (1 - \alpha) x_2) \preceq \alpha F(x_1) + (1 - \alpha) F(x_2)
  \quad \forall x_1, x_2 \in \mathbb{R}^d, \enspace \alpha \in [0, 1].
\]
For any $z \in \mathbb{R}^{\ell}$ denote $F_z(\cdot) = \langle z, F(\cdot) z \rangle$. From the definition of matrix
convexity and relations \eqref{eq:LownerOrderViaQuadForm} it follows that the matrix-valued function $F$ is convex if
and only if for any $z \in \mathbb{R}^{\ell}$ the real-valued function $F_z$ is convex. Choosing as $z$ every vector
from the canonical basis of $\mathbb{R}^{\ell}$ one gets that if $F$ is convex, then its diagonal elements 
$F_{ii}$, $i \in \{ 1, \ldots, \ell \}$, are convex functions. However, non-diagonal elements of convex matrix-valued
functions need not be convex. Namely, the following result holds true (see \cite{Dolgopolik_DC_Semidef_I}).

\begin{theorem} \label{thrm:ConvexImpliesDC_Lipschitz}
Let $F$ be convex. Then for any $i, j \in \{ 1, \ldots, \ell \}$, $i \ne j$, the functions $F_{ii}$ are convex, while
the functions $F_{ij}$ are DC (i.e. they can be represented as the difference of convex functions). Hence, in
particular, the function $F$ is locally Lipschitz continuous and directionally differentiable at every 
$x \in \mathbb{R}^d$.
\end{theorem}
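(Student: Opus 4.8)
The convexity of the diagonal entries $F_{ii}$ has in fact already been obtained in the discussion preceding the statement, by taking $z$ to be the $i$-th canonical basis vector $e_i$ in the equivalence ``$F$ is convex $\iff$ $F_z$ is convex for all $z \in \mathbb{R}^{\ell}$''. So the genuinely new content is the DC representation of the off-diagonal entries $F_{ij}$, $i \ne j$. The plan is to extract these entries from the same equivalence, but by feeding in cleverly chosen test vectors rather than basis vectors alone.

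The key observation is a polarization identity for the symmetric bilinear form $(z, w) \mapsto \langle z, F(x) w \rangle$. Concretely, I would evaluate $F_z$ at $z = e_i + e_j$ and at $z = e_i - e_j$. Using the symmetry $F_{ij} = F_{ji}$ of the matrix $F(x)$, a direct expansion yields
\[
  F_{e_i + e_j} = F_{ii} + F_{jj} + 2 F_{ij}, \qquad F_{e_i - e_j} = F_{ii} + F_{jj} - 2 F_{ij}.
\]
Since $F$ is convex, both $F_{e_i + e_j}$ and $F_{e_i - e_j}$ are convex real-valued functions on $\mathbb{R}^d$. Subtracting the two identities gives
\[
  F_{ij} = \frac{1}{4} \bigl( F_{e_i + e_j} - F_{e_i - e_j} \bigr),
\]
which exhibits $F_{ij}$ as the difference of the two convex functions $\frac{1}{4} F_{e_i + e_j}$ and $\frac{1}{4} F_{e_i - e_j}$. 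Hence $F_{ij}$ is DC, as claimed.

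With every entry of $F$ now known to be either convex (on the diagonal) or DC (off the diagonal), the remaining assertions follow from classical real-valued convex analysis. A finite-valued convex function on $\mathbb{R}^d$ is locally Lipschitz continuous and admits a finite directional derivative in every direction at every point, and both properties are stable under taking differences, so every DC function enjoys them as well. Finally, since $\mathbb{S}^{\ell}$ is finite-dimensional, entrywise local Lipschitz continuity (respectively, entrywise directional differentiability) of $F$ is equivalent to local Lipschitz continuity (respectively, directional differentiability) of the matrix-valued map $F$ itself with respect to $\| \cdot \|_F$. This delivers the final statement.

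Regarding difficulty, I do not expect a serious obstacle here. The one step that requires genuine care is the polarization bookkeeping together with the correct use of the symmetry $F_{ij} = F_{ji}$; once the two test vectors $e_i \pm e_j$ are in hand, everything else reduces to a routine appeal to the standard properties of differences of convex functions and to the finite-dimensionality of $\mathbb{S}^{\ell}$.
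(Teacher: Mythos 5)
Your proof is correct. One remark on the comparison itself: the paper does not prove this statement at all --- it is imported from the reference cited right before it (the paper merely recalls the result ``see \cite{Dolgopolik_DC_Semidef_I}''), so there is no in-paper argument to match against. Your blind reconstruction is exactly the natural one given the tools the paper sets up: convexity of the diagonal entries via $F_{e_i}$, and then the polarization identity
\[
  F_{ij} = \tfrac{1}{4}\bigl( F_{e_i + e_j} - F_{e_i - e_j} \bigr),
\]
which exhibits each off-diagonal entry as a difference of the convex functions $\tfrac14 F_{e_i+e_j}$ and $\tfrac14 F_{e_i-e_j}$. The bookkeeping checks out ($F_{e_i \pm e_j} = F_{ii} + F_{jj} \pm 2F_{ij}$ by symmetry of $F(x)$), and the final passage from entrywise regularity to regularity of the matrix-valued map via finite-dimensionality of $\mathbb{S}^{\ell}$ is sound: finite convex functions on $\mathbb{R}^d$ are locally Lipschitz and directionally differentiable, both properties survive taking differences, and entrywise limits and Lipschitz estimates assemble into the corresponding properties of $F$ under $\|\cdot\|_F$. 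In short, you have supplied a complete, self-contained proof of a result the paper leaves as a citation.
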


Let us present a simple example illustrating the theorem above.

\begin{example} \label{ex:ConvexFunctWithNonconvexElements}
Let $d = \ell = 2$ and 
\[
  F(x) = \begin{pmatrix} 
	  |x^{(1)}| + |x^{(2)}| & |x^{(1)}| - |x^{(2)}| 
	  \\ 
	  |x^{(1)}| - |x^{(2)}| & |x^{(1)}| + |x^{(2)}|  
	 \end{pmatrix}
  \quad \forall x = (x^{(1)}, x^{(2)}) \in \mathbb{R}^2
\]
Let us check that the function $F$ is convex despite the fact that its non-diagonal elements are nonconvex functions.
Indeed, note that for any $z \in \mathbb{R}^2$ one has
\begin{align*}
  F_z(x) = \langle z, F(x) z \rangle &= \big( |x^{(1)}| + |x^{(2)}| \big) (z^{(1)})^2 
  \\
  &+ 2 \big( |x^{(1)}| - |x^{(2)}| \big) z^{(1)} z^{(2)} + \big( |x^{(1)}| + |x^{(2)}| \big) (z^{(2)})^2 
  \\
  &= |x^{(1)}| \big( z^{(1)} + z^{(2)} \big)^2 + |x^{(2)}| \big( z^{(1)} - z^{(2)} \big)^2.
\end{align*}
The function $F_z(\cdot)$ is obviously convex for any $z \in \mathbb{R}^2$, which implies that the matrix-valued
function $F$ is also convex.
\end{example}

\begin{remark}
Note that for any convex function $f \colon \mathbb{R}^d \to \mathbb{R}$ the matrix-valued function 
$F(\cdot) = \left( \begin{smallmatrix} f(\cdot) & - f(\cdot) \\ - f(\cdot) & f(\cdot) \end{smallmatrix} \right)$ 
is convex, since for any $z \in \mathbb{R}^2$ the function $F_z(\cdot) = f(\cdot) (z^{(1)} - z^{(2)})^2$ is obviously
convex.
\end{remark}

Since non-diagonal elements of a convex matrix-valued function can be nonconvex, natural questions of how to compute
subdifferentials of such functions and whether the classical subdifferential calculus can be extended to the
matrix-valued case arise. We present detailed answers to these questions in the following sections.

\section{General properties of subdifferentials}
\label{sect:GeneralProperties}

Let us start by studying some basic properties of subdifferentials of convex matrix-valued functions. 
Let $F \colon \mathbb{R}^d \to \mathbb{S}^{\ell}$ be a convex function. Recall that the subdifferential of $F$ at a
point $x \in \mathbb{R}^d$ is defined as the set of all those linear operators 
$\mathcal{A} \colon \mathbb{R}^d \to \mathbb{S}^{\ell}$ for which
\[
  F(y) - F(x) \succeq \mathcal{A}(y - x) \quad \forall y \in \mathbb{R}^d.
\]
(see \cite{Papageorgiou,Thera,KusraevKutateladze}). One can readily verify that any linear operator 
$\mathcal{A} \colon \mathbb{R}^d \to \mathbb{S}^{\ell}$ has the form
\[
  \mathcal{A} y = y^{(1)} V^{(1)} + \ldots + y^{(d)} V^{(d)} 
  \quad \forall y = (y^{(1)}, \ldots, y^{(d)}) \in \mathbb{R}^d
\]
for some matrices $V^{(i)} \in \mathbb{S}^{\ell}$, $i \in \{ 1, \ldots, d \}$ (namely, $V^{(i)} = \mathcal{A} e_i$, where 
$e_i$ is the $i$-th vector from the canonical basis of $\mathbb{R}^d$). Therefore, hereinafter we identify linear operators
from $\mathbb{R}^d$ to $\mathbb{S}^{\ell}$ with such collections of matrices and define the subdifferential of $F$ at
a point $x \in \mathbb{R}^d$ as follows:
\begin{multline} \label{eq:SubdiffDef}
  \partial F(x) = \Big\{ (V^{(1)}, \ldots, V^{(d)}) \in \mathbb{S}^{\ell} \times \ldots \times \mathbb{S}^{\ell} \Bigm|
  \\
  F(y) - F(x) \succeq \sum_{i = 1}^d (y^{(i)} - x^{(i)}) V^{(i)} \enspace \forall y \in \mathbb{R}^d \Big\}
\end{multline}
The subdifferential of $F$ shares many properties with subdifferentials of real-valued convex functions.

\begin{theorem} \label{thrm:SubdiffConvexCompact}
For any $x \in \mathbb{R}^d$ the subdifferential of $F$ at $x$ is a compact convex set, and the subdifferential mapping
$\partial F(\cdot)$ is locally bounded.
\end{theorem}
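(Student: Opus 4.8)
The plan is to establish the three claimed properties --- convexity, compactness, and local boundedness of the mapping $\partial F(\cdot)$ --- one at a time, treating compactness as closedness plus boundedness, which is legitimate since the product $\mathbb{S}^{\ell} \times \ldots \times \mathbb{S}^{\ell}$ is finite-dimensional.

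Convexity and closedness are routine and rest only on the order properties collected in the preliminaries. For convexity I would take two elements $(V^{(1)}, \ldots, V^{(d)})$ and $(W^{(1)}, \ldots, W^{(d)})$ of $\partial F(x)$ together with $\lambda \in [0, 1]$, multiply their defining inequalities from \eqref{eq:SubdiffDef} by $\lambda \ge 0$ and $1 - \lambda \ge 0$ respectively --- permissible by the last line of \eqref{eq:LoewnerOrderProp} --- and add them using additivity of $\preceq$; the result is exactly the defining inequality for the convex combination $\lambda V^{(i)} + (1 - \lambda) W^{(i)}$. For closedness I would take a convergent sequence of subgradients: for each fixed $y$ the right-hand side of the inequality in \eqref{eq:SubdiffDef} converges, and one passes to the limit in the matrix inequality, which is allowed as noted right after \eqref{eq:LoewnerOrderProp}.

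The substantive part is boundedness, and here the strategy is to sandwich each component matrix and then invoke Lemma~\ref{lem:BoundedOrderBall}. Fix $x$ and $(V^{(1)}, \ldots, V^{(d)}) \in \partial F(x)$, and let $e_j$ be the $j$-th canonical basis vector of $\mathbb{R}^d$. Testing the defining inequality with $y = x + e_j$ gives $V^{(j)} \preceq F(x + e_j) - F(x)$, while testing with $y = x - e_j$ and applying the equivalence $-B \preceq -A \Longleftrightarrow A \preceq B$ yields
\[
  F(x) - F(x - e_j) \preceq V^{(j)} \preceq F(x + e_j) - F(x).
\]
Thus $V^{(j)}$ lies above one difference matrix and below another, so with $K$ the finite (hence bounded) set of all such difference matrices one obtains $V^{(j)} \in (K + \mathbb{S}^{\ell}_+) \cap (K + \mathbb{S}^{\ell}_-) = \mathbb{B}_K$, which is bounded by Lemma~\ref{lem:BoundedOrderBall}. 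Hence $\partial F(x)$ is bounded and, together with closedness, compact.

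For local boundedness I would simply make the set $K$ uniform over a neighbourhood. Given $x_0$, I choose a compact neighbourhood $U$ (a closed ball will do). Since $F$ is continuous --- indeed locally Lipschitz by Theorem~\ref{thrm:ConvexImpliesDC_Lipschitz} --- the matrices $F(x + e_j) - F(x)$ and $F(x) - F(x - e_j)$ range over a bounded set $K$ as $x$ runs through $U$ and $j$ through $\{1, \ldots, d\}$. The sandwiching above then places every component of every subgradient at every $x \in U$ inside the single bounded set $\mathbb{B}_K$, giving the desired uniform bound. The main obstacle is recognising that Lemma~\ref{lem:BoundedOrderBall} is precisely the device that converts order-sandwiching into a norm estimate; once the two one-sided inequalities are extracted from the coordinate test directions, the rest is bookkeeping.
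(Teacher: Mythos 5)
Your proof is correct, and it diverges from the paper's in the one step that matters. The convexity and closedness arguments are identical to the paper's (multiply the defining inequalities by $\lambda$ and $1-\lambda$ and add; pass to the limit in the matrix inequalities). For boundedness, both you and the paper use the same core device: sandwich each component $V^{(j)}$ between two matrices in the L\"{o}wner order and then invoke Lemma~\ref{lem:BoundedOrderBall} to turn the order bounds into a norm bound. The difference is how the sandwich is produced. You test the subgradient inequality at the unit-step points $y = x \pm e_j$, so the bounding set $K$ consists of the $2d$ difference matrices $F(x \pm e_j) - F(x)$; for local boundedness you let $x$ range over a compact neighbourhood and use continuity of $F$ (available from Theorem~\ref{thrm:ConvexImpliesDC_Lipschitz}) to keep $K$ bounded. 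The paper instead first invokes \cite[Thm.~5]{Robinson76} to get an order-theoretic Lipschitz condition $F(x_1) - F(x_2) \in L \| x_1 - x_2 \| \mathbb{B}$ on a ball $B(x,r)$, and then tests at $y \pm 0.5\, r\, e_i$, so the sandwich comes with the constants $L$ and $r$ built in. Your route buys self-containedness and elementarity: at a fixed point it needs only finitely many function values, and the only external input for the uniform version is continuity, so the appeal to Robinson's theorem disappears entirely. The paper's route packages the local uniformity into a single Lipschitz-type constant from the start, which is slightly slicker but imports a nontrivial cited result. Both arguments are complete.
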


\begin{proof}
Fix any $x \in \mathbb{R}^d$ and let $V, W \in \partial F(x)$. By definition it means that
\[
  F(y) - F(x) \succeq \sum_{i = 1}^d (y^{(i)} - x^{(i)}) V^{(i)}, \quad
  F(y) - F(x) \succeq \sum_{i = 1}^d (y^{(i)} - x^{(i)}) W^{(i)}
\]
for all $y \in \mathbb{R}^d$. Multiplying the first inequality by some $\alpha \in [0, 1]$, the second one by 
$1 - \alpha$, and summing them up one gets
\[
  F(y) - F(x) \succeq \sum_{i = 1}^d (y^{(i)} - x^{(i)}) (\alpha V^{(i)} + (1 - \alpha) W^{(i)})
  \quad \forall y \in \mathbb{R}^d
\]
(see \eqref{eq:LoewnerOrderProp}). Therefore $\alpha V + (1 - \alpha) W \in \partial F(x)$ for any $\alpha \in [0, 1]$,
which means that the subdifferential $\partial F(x)$ is a convex set.

Let now a sequence $\{ V_n \} \subset \partial F(x)$ converge to some $V \in (\mathbb{S}^{\ell})^d$. By the definition of
subdifferential one has
\[
  F(y) - F(x) \succeq \sum_{i = 1}^d (y^{(i)} - x^{(i)}) V_n^{(i)}
  \quad \forall y \in \mathbb{R}^d, \: \forall n \in \mathbb{N}.
\]
Passing to the limit as $n \to \infty$ one obtains that $V \in \partial F(x)$, that is, the subdifferential is closed.
Let us finally show that the subdifferential mapping $\partial F(\cdot)$ is locally bounded. Then, in particular, one
can conclude that $\partial F(x)$ is a compact convex set.

Indeed, introduce the sets $B_{\mathbb{S}^{\ell}} = \{ V \in \mathbb{S}^{\ell} \mid \| V \|_F \le 1 \}$ and 
\[
  \mathbb{B} = 
  \Big( B_{\mathbb{S}^{\ell}} + \mathbb{S}^{\ell}_+ \Big) \cap \Big( B_{\mathbb{S}^{\ell}} + \mathbb{S}^{\ell}_- \Big).
\]
By \cite[Thm.~5]{Robinson76} there exist $r > 0$ and $L > 0$ such that 
\begin{equation} \label{eq:VectorLipshitzCond}
  F(x_1) - F(x_2) \in L \| x_1 - x_2 \| \mathbb{B} \quad \forall x_1, x_2 \in B(x, r), 
\end{equation}
where $B(x, r) = \{ y \in \mathbb{R}^d \mid \| y - x \| \le r \}$ and $\| \cdot \|$ is the Euclidean norm.

Choose any $y \in B(x, 0.5 r)$, $V \in \partial F(y)$, and $i \in \{ 1, \ldots, d \}$. By the definition of
subdifferential for any $z \in \mathbb{R}^d$ one has $F(z) - F(y) \succeq \sum_{i = 1} (z^{(i)} - y^{(i)}) V^{(i)}$,
which by the definition of the L\"{o}wner partial order means that the matrix
$F(z) - F(y) - \sum_{i = 1} (z^{(i)} - y^{(i)}) V^{(i)}$ belongs to $\mathbb{S}^{\ell}_+$. Hence, in particular, 
\[
  (y^{(i)} - t) V^{(i)} \in F(y) - F(y^{(1)}, \ldots, y^{(i-1)}, t, y^{(i + 1)}, \ldots, y^{(d)}) + \mathbb{S}^{\ell}_+
  \quad \forall t \in \mathbb{R}.
\]
Clearly, $y \pm 0.5 r e_i \in B(y, 0.5 r) \subseteq B(x, r)$. Consequently, putting $t = y^{(i)} \pm 0.5 r$ and applying
\eqref{eq:VectorLipshitzCond} one gets that $\pm 0.5 r V^{(i)} \in 0.5 L r \mathbb{B} + \mathbb{S}^{\ell}_+$ or,
equivalently,
\[
  V^{(i)} \in 
  \Big( L \mathbb{B} + \mathbb{S}^{\ell}_+ \Big) \cap \Big( L \mathbb{B} + \mathbb{S}^{\ell}_- \Big)
  \quad \forall i \in \{ 1, \ldots, d \}.
\]
(here we used the fact that $\mathbb{B} = - \mathbb{B}$). The set $\mathbb{B}$ and, consequently, the set on the
right-hand side of the inclusion above are bounded by Lemma~\ref{lem:BoundedOrderBall}. Therefore there exists $R > 0$
(independent of $i \in \{ 1, \ldots, d \}$, $V \in \partial F(y)$, and $y \in B(x, 0.5r)$) such that 
$\| V^{(i)} \|_F \le R$, which implies that the subdifferential mapping $\partial F(\cdot)$ is locally bounded.
\end{proof}

Recall the outer limit of a set-valued mapping $G \colon X \to Y$, with $X$ and $Y$ being metric spaces, at a point 
$x \in X$ is defined as
\begin{multline*}
  \limsup_{x' \to x} G(x') := \Big\{ y \in Y \Bigm| 
  \\
  \exists \{ x_n \} \subset X, \: \{ y_n \} \subset Y
  \colon x_n \to x, \: y_n \to y, \: y_n \in G(x_n) \enspace \forall n \in \mathbb{N} \Big\}
\end{multline*}
(see, e.g. \cite{AubinFrankowska}).

\begin{theorem} \label{thrm:OuterSemiContin}
The subdifferential mapping $\partial F(\cdot)$ is outer semicontinuous (o.s.c.), that is,
\begin{equation} \label{eq:OuterSemiCont}
  \limsup_{y \to x} \partial F(y) \subseteq \partial F(x) \quad \forall x \in \mathbb{R}^d.
\end{equation}
\end{theorem}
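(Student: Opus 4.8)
The plan is to take an arbitrary element $V = (V^{(1)}, \ldots, V^{(d)})$ of the outer limit $\limsup_{y \to x} \partial F(y)$ and show directly that it satisfies the defining inequality of $\partial F(x)$ from \eqref{eq:SubdiffDef}. By the definition of the outer limit there exist sequences $y_n \to x$ in $\mathbb{R}^d$ and $V_n = (V_n^{(1)}, \ldots, V_n^{(d)}) \to V$ in $(\mathbb{S}^{\ell})^d$ such that $V_n \in \partial F(y_n)$ for every $n \in \mathbb{N}$. Writing out the subgradient inequality for each $V_n$ gives
\[
  F(z) - F(y_n) \succeq \sum_{i = 1}^d (z^{(i)} - y_n^{(i)}) V_n^{(i)}
  \quad \forall z \in \mathbb{R}^d, \: \forall n \in \mathbb{N}.
\]

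Next I would fix an arbitrary $z \in \mathbb{R}^d$ and pass to the limit as $n \to \infty$ in this single inequality. The right-hand side converges to $\sum_{i = 1}^d (z^{(i)} - x^{(i)}) V^{(i)}$ componentwise, since $y_n \to x$ and $V_n^{(i)} \to V^{(i)}$ for each $i$. For the left-hand side I would invoke Theorem~\ref{thrm:ConvexImpliesDC_Lipschitz}, which guarantees that $F$ is locally Lipschitz continuous and hence continuous, so that $F(y_n) \to F(x)$ and therefore $F(z) - F(y_n) \to F(z) - F(x)$. Finally, because one can pass to the limit in matrix inequalities (as noted in the preliminaries via \eqref{eq:LownerOrderViaQuadForm}, the cone $\mathbb{S}^{\ell}_+$ being closed), the limit inequality $F(z) - F(x) \succeq \sum_{i = 1}^d (z^{(i)} - x^{(i)}) V^{(i)}$ holds. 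Since $z \in \mathbb{R}^d$ was arbitrary, this shows $V \in \partial F(x)$, which establishes \eqref{eq:OuterSemiCont}.

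The argument is essentially a continuity-plus-closedness limiting argument, so I do not anticipate a genuine obstacle. The only points that require a little care are handling the convergence of the linear combination on the right-hand side componentwise and citing the continuity of $F$ (from Theorem~\ref{thrm:ConvexImpliesDC_Lipschitz}) rather than taking it for granted. It is worth emphasising that neither the compactness nor the local boundedness from Theorem~\ref{thrm:SubdiffConvexCompact} is needed here, because the convergence $V_n \to V$ is already built into the definition of the outer limit, so no subsequence extraction is required.
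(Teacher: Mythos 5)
Your proposal is correct and follows essentially the same argument as the paper: take the sequences $y_n \to x$, $V_n \to V$ with $V_n \in \partial F(y_n)$, write the subgradient inequalities, and pass to the limit using the continuity of $F$ and the closedness of the L\"{o}wner order. The only cosmetic difference is that you justify continuity via Theorem~\ref{thrm:ConvexImpliesDC_Lipschitz} while the paper cites Robinson's theorem directly; both are valid.
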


\begin{proof}
Fix some $x \in \mathbb{R}^d$. Let some sequence $\{ x_n \} \subset \mathbb{R}^d$ converge to $x$ and a sequence of
subgradients $V_n \in \partial F(x_n)$, $n \in \mathbb{N}$, converge to some $V \in (\mathbb{S}^{\ell})^d$. By the
definition of subdifferential
\[
  F(y) - F(x_n) \succeq \sum_{i = 1}^d (y^{(i)} - x_n^{(i)}) V_n^{(i)} \quad \forall y \in \mathbb{R}^d.
\]
Passing to the limit as $n \to \infty$ with the use of the fact that a convex matrix-valued function is continuous
(see, e.g. \cite[Thm.~5]{Robinson76}) one obtains
\[
  F(y) - F(x) \succeq \sum_{i = 1}^d (y^{(i)} - x^{(i)}) V^{(i)} \quad \forall y \in \mathbb{R}^d,
\]
which obviously means that $V \in \partial F(x)$, that is, inclusion \eqref{eq:OuterSemiCont} holds true.
\end{proof}

\begin{remark}
With the use of the previous theorem and the compactness of subdifferential (or by applying standard results on o.s.c.
compact-valued multifunctions \cite{AubinFrankowska}) one can readily verify that for any $x \in \mathbb{R}^d$ and
$\varepsilon > 0$ there exists $\delta > 0$ such that 
$\partial F(y) \subseteq \partial F(x) + \varepsilon B_{\mathbb{S}^{\ell}}$ for all $y \in B(x, \delta)$.
\end{remark}

In some cases, the following simple characterisation of the subdifferential of $F$ in terms of the subdifferentials of
the functions $F_z(\cdot) = \langle z, F(\cdot) z \rangle$, $z \in \mathbb{R}^{\ell}$, can be useful. Recall that from
the convexity of $F$ it follows that the functions $F_z$ are convex as well.

\begin{lemma} \label{lem:SubdiffQuadFormCharacterization}
For any $x \in \mathbb{R}^d$ the following equality holds true:
\[
  \partial F(x) = \Big\{ V \in (\mathbb{S}^{\ell})^d \Bigm|
  (\langle z, V^{(1)} z \rangle, \ldots, \langle z, V^{(d)} z \rangle) \in \partial F_z(x) \enspace 
  \forall z \in \mathbb{R}^{\ell} \Big\}.
\]
\end{lemma}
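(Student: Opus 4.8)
The plan is to reduce the matrix subgradient inequality defining $\partial F(x)$ in \eqref{eq:SubdiffDef} to a family of scalar subgradient inequalities indexed by $z \in \mathbb{R}^{\ell}$, by applying the characterisation \eqref{eq:LownerOrderViaQuadForm} of the L\"{o}wner partial order through quadratic forms. The whole argument is essentially a rewriting of the defining condition followed by an interchange of two universal quantifiers, so I expect no genuine obstacle; the only points requiring care are the correct bookkeeping of the quadratic form and the recognition that each $F_z$ is an ordinary real-valued convex function.

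First I would fix $V = (V^{(1)}, \ldots, V^{(d)}) \in (\mathbb{S}^{\ell})^d$ and rewrite the defining inequality. By \eqref{eq:LownerOrderViaQuadForm}, for a fixed $y \in \mathbb{R}^d$ the matrix inequality $F(y) - F(x) \succeq \sum_{i=1}^d (y^{(i)} - x^{(i)}) V^{(i)}$ holds if and only if, for every $z \in \mathbb{R}^{\ell}$, the scalar inequality
\[
  \langle z, (F(y) - F(x)) z \rangle - \sum_{i=1}^d (y^{(i)} - x^{(i)}) \langle z, V^{(i)} z \rangle \ge 0
\]
is satisfied. Since $\langle z, F(\cdot) z \rangle = F_z(\cdot)$ by definition, this is exactly
\[
  F_z(y) - F_z(x) \ge \sum_{i=1}^d (y^{(i)} - x^{(i)}) \langle z, V^{(i)} z \rangle.
\]

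Next I would observe that $V \in \partial F(x)$ means the matrix inequality holds for \emph{all} $y \in \mathbb{R}^d$, hence, by the previous step, that the displayed scalar inequality holds for all $y \in \mathbb{R}^d$ and all $z \in \mathbb{R}^{\ell}$. Both of these are universal quantifiers, so they may be interchanged freely: $V \in \partial F(x)$ if and only if for every $z \in \mathbb{R}^{\ell}$ and every $y \in \mathbb{R}^d$ the scalar inequality holds. For a fixed $z$, writing $w_z = (\langle z, V^{(1)} z \rangle, \ldots, \langle z, V^{(d)} z \rangle)$, the condition ``$F_z(y) - F_z(x) \ge \langle w_z, y - x \rangle$ for all $y \in \mathbb{R}^d$'' is precisely the statement $w_z \in \partial F_z(x)$ in the usual sense of convex analysis. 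Here I would invoke the fact, recalled in the preliminaries, that the convexity of $F$ guarantees that each $F_z$ is a real-valued convex function, so that its subdifferential $\partial F_z(x)$ is the classical convex subdifferential.

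Putting these together, $V \in \partial F(x)$ if and only if $w_z \in \partial F_z(x)$ for every $z \in \mathbb{R}^{\ell}$, which is exactly the set equality asserted in the lemma, completing the proof. The argument establishes both inclusions simultaneously because every step is an equivalence rather than merely an implication.
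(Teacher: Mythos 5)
Your proof is correct and follows essentially the same route as the paper: unfold the definition \eqref{eq:SubdiffDef}, convert the matrix inequality into scalar inequalities via \eqref{eq:LownerOrderViaQuadForm}, and recognise the resulting condition as the classical subgradient inequality for $F_z$, with every step an equivalence. The explicit remark about interchanging the two universal quantifiers is a harmless elaboration of what the paper does implicitly.
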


\begin{proof}
By definition $V \in \partial F(x)$ if and only if
\[
  \langle z, \Big( F(y) - F(x) - \sum_{i = 1}^d (y^{(i)} - x^{(i)}) V^{(i)} \Big) z \rangle \ge 0
  \quad \forall z \in \mathbb{R}^{\ell} \: \forall y \in \mathbb{R}^d
\]
(see \eqref{eq:SubdiffDef} and \eqref{eq:LownerOrderViaQuadForm}) or, equivalently,
\[
  F_z(y) - F_z(x) \ge \sum_{i = 1}^d \langle z, V^{(i)} z \rangle (y^{(i)} - x^{(i)})
  \quad \forall z \in \mathbb{R}^{\ell} \: \forall y \in \mathbb{R}^d,
\]
which is equivalent to the inclusion 
$(\langle z, V^{(1)} z \rangle, \ldots, \langle z, V^{(d)} z \rangle) \in \partial F_z(x)$
for all $z \in \mathbb{R}^{\ell}$.
\end{proof}

With the use of the previous lemma we can show that the subdifferential of a Fr\'{e}chet differentiable convex
matrix-valued function is, as one can expect, a singleton.

\begin{proposition} \label{prp:SubdiffAtDiffPoint}
If $F$ is G\^{a}teaux differentiable at a point $x$, then $F'(x) \in \partial F(x)$, where $F'(x)$ is the G\^{a}teaux
derivative of $F$ at $x$. If $F$ is Fr\'{e}chet differentiable at $x$, then $\partial F(x) = \{ F'(x) \}$.
\end{proposition}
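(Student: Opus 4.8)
The plan is to reduce both assertions to classical facts about scalar convex functions by means of Lemma~\ref{lem:SubdiffQuadFormCharacterization}. Identify the G\^{a}teaux derivative $F'(x)$ with the collection of matrices $(W^{(1)}, \ldots, W^{(d)})$, where $W^{(i)} = F'(x) e_i \in \mathbb{S}^{\ell}$. The first thing I would verify is that the G\^{a}teaux differentiability of $F$ is inherited by each scalar function $F_z(\cdot) = \langle z, F(\cdot) z \rangle$: for any direction $u \in \mathbb{R}^d$, the bilinearity of the inner product gives
\[
  \lim_{t \to 0} \frac{F_z(x + tu) - F_z(x)}{t}
  = \Big\langle z, \Big( \lim_{t \to 0} \frac{F(x + tu) - F(x)}{t} \Big) z \Big\rangle
  = \sum_{i = 1}^d u^{(i)} \langle z, W^{(i)} z \rangle,
\]
which is linear in $u$. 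Hence $F_z$ is G\^{a}teaux differentiable at $x$ with gradient $\nabla F_z(x) = (\langle z, W^{(1)} z \rangle, \ldots, \langle z, W^{(d)} z \rangle)$.

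For the first claim, recall the classical scalar fact that a convex function on $\mathbb{R}^d$ which is G\^{a}teaux differentiable at a point has its gradient as the (unique) subgradient there; in particular $\nabla F_z(x) \in \partial F_z(x)$ for every $z \in \mathbb{R}^{\ell}$. By the displayed identity this says precisely that $(\langle z, W^{(1)} z \rangle, \ldots, \langle z, W^{(d)} z \rangle) \in \partial F_z(x)$ for all $z$, so Lemma~\ref{lem:SubdiffQuadFormCharacterization} immediately yields $F'(x) \in \partial F(x)$.

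For the second claim, note that Fr\'{e}chet differentiability implies G\^{a}teaux differentiability, so the above applies and, moreover, the scalar result gives $\partial F_z(x) = \{ \nabla F_z(x) \}$ for every $z$. Now take an arbitrary $V = (V^{(1)}, \ldots, V^{(d)}) \in \partial F(x)$. By Lemma~\ref{lem:SubdiffQuadFormCharacterization} the vector $(\langle z, V^{(1)} z \rangle, \ldots, \langle z, V^{(d)} z \rangle)$ lies in $\partial F_z(x) = \{ \nabla F_z(x) \}$, whence $\langle z, V^{(i)} z \rangle = \langle z, W^{(i)} z \rangle$ for every $i$ and every $z \in \mathbb{R}^{\ell}$. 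Since a symmetric matrix is uniquely determined by its associated quadratic form, this forces $V^{(i)} = W^{(i)}$ for all $i$, i.e. $V = F'(x)$. Together with the first claim this gives $\partial F(x) = \{ F'(x) \}$.

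The computation transferring differentiability from $F$ to $F_z$ and the use of Lemma~\ref{lem:SubdiffQuadFormCharacterization} are routine; the only point requiring care is the classical scalar assertion that G\^{a}teaux differentiability of a \emph{convex} function yields a singleton subdifferential equal to the gradient, which is where convexity of the $F_z$ (guaranteed by convexity of $F$) is essential. It is worth remarking that, in finite dimensions, a convex function that is G\^{a}teaux differentiable is automatically Fr\'{e}chet differentiable, so the singleton conclusion in fact already holds under the weaker hypothesis, and the separation in the statement is a matter of emphasis. An entirely self-contained alternative avoids the scalar theorem: passing to the limit as $t \to 0^+$ in the subgradient inequality $F(x + tu) - F(x) \succeq t \sum_i u^{(i)} V^{(i)}$ (legitimate, since one may pass to limits in L\"{o}wner inequalities) gives $F'(x) u \succeq \sum_i u^{(i)} V^{(i)}$, and replacing $u$ by $-u$ together with antisymmetry of the order yields $\sum_i u^{(i)} V^{(i)} = F'(x) u$ for all $u$, hence $V = F'(x)$.
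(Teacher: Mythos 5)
Your proof is correct, and its core tool for the uniqueness part is the same as the paper's, namely scalarisation through Lemma~\ref{lem:SubdiffQuadFormCharacterization}; but the execution differs in ways worth recording. For the first claim the paper simply cites Lemma~1 of the earlier work \cite{Dolgopolik_DC_Semidef_I}, whereas you derive it from the classical fact that the G\^{a}teaux derivative of a finite convex function on $\mathbb{R}^d$ is its (unique) subgradient, applied to each $F_z$ after the routine transfer of differentiability from $F$ to $F_z$; this makes the proposition self-contained. For the second claim the paper applies the lemma only with special vectors $z$ (first the canonical basis vectors $e_i$, then $e_i + e_j$), identifying the diagonal entries $V_{ii}^{(k)}$ and then the off-diagonal entries $V_{ij}^{(k)}$ via the scalar sum rule for $\partial(F_{ii}+F_{jj}+2F_{ij})(x)$; you instead run the lemma over all $z$ simultaneously and conclude from the fact that a symmetric matrix is determined by its quadratic form. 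This is the same polarisation idea, but packaged more cleanly and without invoking the sum rule. Two additional points in your write-up are correct and worth keeping: the remark that G\^{a}teaux differentiability already yields the singleton conclusion (your argument only needs $\partial F_z(x)$ to be a singleton, which holds for finite convex $F_z$ under G\^{a}teaux differentiability; alternatively each component $F_{ij} = \tfrac12\bigl(F_{e_i+e_j} - F_{e_i} - F_{e_j}\bigr)$ is then Fr\'{e}chet differentiable, so $F$ itself is), and the closing alternative argument that passes to the limit in the subgradient inequality and uses antisymmetry of the L\"{o}wner order — that route avoids the scalar theorem entirely and is arguably the most elementary proof of uniqueness, something the paper does not offer.
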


\begin{proof}
The fact that $F'(x) \in \partial F(x)$ follows, in particular, from \cite[Lemma~1]{Dolgopolik_DC_Semidef_I}. Let us
show that the subdifferential is a singleton in the case when $F$ is Fr\'{e}chet differentiable at $x$. 

Indeed, let $V \in \partial F(x)$. Applying Lemma~\ref{lem:SubdiffQuadFormCharacterization} with $z$ running through the
canonical basis of $\mathbb{R}^{\ell}$ one gets that $(V_{ii}^{(1)}, \ldots, V_{ii}^{(d)}) = \nabla F_{ii}(x)$ for all 
$i \in \{ 1, \ldots, \ell \}$. Hence choosing any $i, j \in \{ 1, \ldots, \ell \}$, $i \ne j$, defining 
$z^{(i)} = z^{(j)} = 1$ and $z^{(k)} = 0$, if $k \notin \{ i, j \}$, and applying
Lemma~\ref{lem:SubdiffQuadFormCharacterization} once again, one obtains that
\begin{multline*}
  (V_{ii}^{(1)} + V_{jj}^{(1)} + 2 V_{ij}^{(1)}, \ldots, V_{ii}^{(d)} + V_{jj}^{(d)} + 2 V_{ij}^{(d)})
  \\
  = \left( \frac{\partial F_{ii}(x)}{\partial x^{(1)}} + \frac{\partial F_{jj}(x)}{\partial x^{(1)}} + 2 V_{ij}^{(1)},
  \ldots,
  \frac{\partial F_{ii}(x)}{\partial x^{(d)}} + \frac{\partial F_{jj}(x)}{\partial x^{(d)}} + 2 V_{ij}^{(d)} \right)
  \\
  \in \partial (F_{ii} + F_{jj} + 2 F_{ij})(x) = \nabla F_{ii}(x) + \nabla F_{jj}(x) + 2 \nabla F_{ij}(x).
\end{multline*}
Here we used the fact that $\partial (F_{ii} + F_{jj} + 2 F_{ij})(x) = \nabla (F_{ii} + F_{jj} + 2 F_{ij})(x)$, since
the function $F_{ii} + F_{jj} + 2 F_{ij}$ is Fr\'{e}chet differentiable at $x$. Thus, one can conclude that
$(V_{ij}^{(1)}, \ldots, V_{ij}^{(d)}) = \nabla F_{ij}(x)$ for any $i, j \in \{ 1, \ldots, d \}$.
Consequently, $V^{(i)} = \partial F(x)/\partial x^{(i)}$ for any $i \in \{ 1, \ldots, d \}$ and $V = F'(x)$.
\end{proof}

\section{Computing subdifferentials: univariate case}
\label{sect:OneDimCase}

In this section we study subdifferentials of convex matrix-valued functions in the one-dimensional case, that is, in the
case of functions defined on the real line $\mathbb{R}$. Let a function $F \colon \mathbb{R} \to \mathbb{S}^{\ell}$ be
given. First, we provide a convenient (from the theoretical point of view) characterisation of the convexity of $F$, which
is an extension of a well-known result for real-valued convex functions to the matrix-valued case (see, e.g.
\cite[Section~24]{Rockafellar}).

\begin{lemma}
The function $F$ is convex if and only if for any $x_1 < x < x_2$ one has
\begin{equation} \label{eq:IncreasingSecant}
  \frac{1}{x - x_1} \big( F(x) - F(x_1) \big) \preceq \frac{1}{x_2 - x} \big( F(x_2) - F(x) \big)
\end{equation}
\end{lemma}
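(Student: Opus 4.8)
The plan is to reduce the matrix inequality to a scalar one using the characterisation of the Löwner order via quadratic forms, which the paper has already established as the key bridge between the matrix-valued and real-valued settings. Recall from the preliminaries that $F$ is convex if and only if $F_z(\cdot) = \langle z, F(\cdot) z \rangle$ is a convex real-valued function for every $z \in \mathbb{R}^{\ell}$, and from \eqref{eq:LownerOrderViaQuadForm} that the matrix inequality \eqref{eq:IncreasingSecant} holds if and only if the corresponding inequality between the quadratic forms $F_z$ holds for every $z$. So the whole statement should collapse to the classical scalar result applied uniformly in $z$.

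Concretely, I would first fix an arbitrary $z \in \mathbb{R}^{\ell}$ and observe that, by pairing both sides of \eqref{eq:IncreasingSecant} with $z$, the matrix inequality \eqref{eq:IncreasingSecant} is equivalent to
\[
  \frac{1}{x - x_1} \big( F_z(x) - F_z(x_1) \big) \le \frac{1}{x_2 - x} \big( F_z(x_2) - F_z(x) \big)
\]
holding for all $x_1 < x < x_2$ and all $z \in \mathbb{R}^{\ell}$. This is the statement that the secant slopes of the scalar function $F_z$ are nondecreasing. I would then invoke the classical one-dimensional result (the cited Section~24 of Rockafellar) that a real-valued function on $\mathbb{R}$ is convex if and only if this increasing-secant-slope condition holds. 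This immediately gives that \eqref{eq:IncreasingSecant} holds for all admissible triples $x_1 < x < x_2$ if and only if $F_z$ is convex for every $z$.

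To finish, I would chain the equivalences: $F$ convex $\iff$ $F_z$ convex for all $z$ (preliminaries) $\iff$ increasing secant slopes for each scalar $F_z$ (Rockafellar) $\iff$ the matrix inequality \eqref{eq:IncreasingSecant} for all $x_1 < x < x_2$ (the quadratic-form characterisation \eqref{eq:LownerOrderViaQuadForm} read in both directions). The key point making the last equivalence work uniformly is that the quantifier over $z$ commutes cleanly through all three conditions, since each is a pointwise-in-$z$ statement. The main thing to be careful about — rather than a genuine obstacle — is the bookkeeping of the quantifiers: one must check that \textquotedblleft for all $z$ the scalar secant condition holds\textquotedblright{} is genuinely the same as \textquotedblleft the matrix secant condition holds,\textquotedblright{} which is exactly what \eqref{eq:LownerOrderViaQuadForm} delivers once both sides are written as a single matrix difference paired with $z$. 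I would also note in passing that the scalar result itself is standard and may simply be cited rather than reproved.
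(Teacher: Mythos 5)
Your proposal is correct, but it follows a genuinely different route from the paper's proof. The paper argues entirely at the matrix level: for the forward direction it writes $x = \alpha x_2 + (1-\alpha)x_1$ with $\alpha = (x-x_1)/(x_2-x_1)$, applies the definition of matrix convexity, and then rearranges the resulting inequality by multiplying by positive scalars and adding matrices to both sides, invoking only the order properties \eqref{eq:LoewnerOrderProp}; the reverse direction reverses the same algebra. In effect, the paper transplants the classical scalar argument into $(\mathbb{S}^{\ell}, \preceq)$, showing that it survives because only vector-space operations and order monotonicity are used. You instead scalarize: you combine the equivalence $A \preceq B \iff \langle z, (B-A)z \rangle \ge 0$ from \eqref{eq:LownerOrderViaQuadForm} with the fact (recorded in the paper's preliminaries) that $F$ is convex iff every $F_z$ is convex, and then cite the classical secant-slope characterisation for the scalar functions $F_z$. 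Your quantifier bookkeeping is sound: since $1/(x-x_1)$ and $1/(x_2-x)$ are positive and $A \mapsto \langle z, Az \rangle$ is linear, the scalar factors pass through the quadratic form, so the matrix secant inequality for a given triple is exactly the scalar secant inequality for all $z$ simultaneously, and all the quantifiers involved are universal and hence commute. What your route buys is brevity and modularity---it is the same scalarization device the paper itself deploys later, e.g.\ in Lemma~\ref{lem:SubdiffQuadFormCharacterization} and the block-matrix results, so the lemma becomes a one-line corollary of known facts. What the paper's route buys is self-containedness: it never uses the scalar theorem as a black box, which is arguably preferable in a paper whose point is that scalar convex analysis does \emph{not} always transfer to the matrix setting, so it is worth exhibiting exactly which order properties make this particular transfer work.
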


\begin{proof}
Let $F$ be convex. Choose any points $x_1 < x < x_2$. Observe that $x$ can be represented as 
$x = \alpha x_2 + (1 - \alpha) x_1$ with $\alpha = (x - x_1) / (x_2 - x_1) \in (0, 1)$. Therefore by the definition of
convexity one has
\[
  F(x) = F(\alpha x_2 + (1 - \alpha) x_1) \preceq \alpha F(x_2) + (1 - \alpha) F(x_1)
\]
or, equivalently,
\[
  \left( \frac{x - x_1}{x_2 - x_1} + \frac{x_2 - x}{x_2 - x_1} \right) F(x)
  \preceq \frac{x - x_1}{x_2 - x_1} F(x_2) + \frac{x_2 - x}{x_2 - x_1} F(x_1).
\]
Multiplying this inequality by $x_2 - x_1$, then dividing it by $(x - x_1)(x_2 - x)$ and rearranging the terms with the use 
of \eqref{eq:LoewnerOrderProp} one gets that inequality \eqref{eq:IncreasingSecant} holds true.

Suppose now that for any $x_1 < x < x_2$ inequality \eqref{eq:IncreasingSecant} holds true. Choose any 
$x_1, x_2 \in \mathbb{R}$. If $x_1 = x_2$, then 
$F(\alpha x_1 + (1 - \alpha) x_2) = \alpha F(x_1) + (1 - \alpha) F(x_2)$ for any $\alpha \in [0, 1]$.
Therefore, assume that $x_1 < x_2$ (the case $x_1 > x_2$ can be considered in exactly the same way). 

Fix any $\alpha \in (0, 1)$ and define $x = \alpha x_1 + (1 - \alpha) x_2$. Then $x_1 < x < x_2$ and by inequality
\eqref{eq:IncreasingSecant} one has
\[
  \frac{1}{(1 - \alpha)(x_2 - x_1)} \big( F(x) - F(x_1) \big) 
  \preceq \frac{1}{\alpha(x_2 - x_1)} \big( F(x_2) - F(x) \big).
\]
Multiplying this inequality by $\alpha (1 - \alpha) (x_2 - x_1)$ and rearranging the terms with the use of
\eqref{eq:LoewnerOrderProp} one obtains that $F(x) \preceq \alpha F(x_1) + (1 - \alpha) F(x_2)$. Hence taking into
account the facts that $\alpha \in (0, 1)$ was chosen arbitrarily and $x = \alpha x_1 + (1 - \alpha) x_2$ one can
conclude that the function $F$ is convex.
\end{proof}

If $F$ is convex, then by Theorem~\ref{thrm:ConvexImpliesDC_Lipschitz} it is directionally differentiable, which
implies that for any $x \in \mathbb{R}$ there exist the right-hand $F'_+(x)$ and the left-hand $F'_-(x)$ derivatives of
$F$ at $x$.

\begin{corollary} \label{crlr:ConvexityVsDD}
If $F$ is convex, then for any $x_1 < x_2$ and $x \in \mathbb{R}$ one has
\begin{gather} \label{eq:DDInequal}
  F'_+(x_1) \preceq \frac{1}{x_2 - x_1} \big( F(x_2) - F(x_1) \big) \preceq F'_-(x_2)
  \\ \label{eq:DDMonotone}
  F'_-(x) \preceq F'_+(x).
\end{gather}
In particular, both mappings $F'_+(\cdot)$ and $F'_-(\cdot)$ are nondecreasing with respect to the L\"{o}wner partial
order.
\end{corollary}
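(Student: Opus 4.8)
The plan is to reduce everything to the classical one-dimensional theory by testing the matrix inequalities against vectors $z \in \mathbb{R}^{\ell}$ through the equivalence \eqref{eq:LownerOrderViaQuadForm}. For each fixed $z \in \mathbb{R}^{\ell}$ the scalar function $F_z(\cdot) = \langle z, F(\cdot) z \rangle$ is convex, as noted in Section~\ref{sect:Prelim}, so the classical version of this corollary for real-valued convex functions (see \cite[Section~24]{Rockafellar}) is available for $F_z$. The task is then to transfer those scalar facts back to matrix inequalities.

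First I would record the identity $(F_z)'_{\pm}(x) = \langle z, F'_{\pm}(x) z \rangle$. Since $F$ is directionally differentiable by Theorem~\ref{thrm:ConvexImpliesDC_Lipschitz}, the one-sided matrix derivatives $F'_{\pm}(x)$ exist in $\mathbb{S}^{\ell}$. The difference quotient satisfies $\frac{1}{t}(F_z(x + t) - F_z(x)) = \langle z, \frac{1}{t}(F(x + t) - F(x)) z \rangle$, so passing to the limit $t \to 0^{\pm}$ and using continuity of the map $A \mapsto \langle z, A z \rangle$ yields the desired identity. Next I would apply the scalar corollary to $F_z$: for $x_1 < x_2$ it gives $(F_z)'_+(x_1) \le \frac{1}{x_2 - x_1}(F_z(x_2) - F_z(x_1)) \le (F_z)'_-(x_2)$, and for any $x$ it gives $(F_z)'_-(x) \le (F_z)'_+(x)$. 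Rewriting these via the identity above and linearity, each becomes an inequality of the form $\langle z, A z \rangle \le \langle z, B z \rangle$; since this holds for every $z \in \mathbb{R}^{\ell}$, the equivalence \eqref{eq:LownerOrderViaQuadForm} converts it into the corresponding matrix inequalities \eqref{eq:DDInequal} and \eqref{eq:DDMonotone}.

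The monotonicity of $F'_+(\cdot)$ and $F'_-(\cdot)$ is then an immediate consequence. For $x < y$, combining the two parts already established gives $F'_+(x) \preceq \frac{1}{y - x}(F(y) - F(x)) \preceq F'_-(y) \preceq F'_+(y)$ and likewise $F'_-(x) \preceq F'_+(x) \preceq \frac{1}{y - x}(F(y) - F(x)) \preceq F'_-(y)$, so both mappings are nondecreasing with respect to the L\"{o}wner partial order by transitivity.

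The only point requiring care is the interchange establishing $(F_z)'_{\pm}(x) = \langle z, F'_{\pm}(x) z \rangle$, together with the prior existence of the one-sided matrix derivatives; once these are in hand the rest is a clean translation of the scalar facts. As an alternative that avoids the reduction, one could argue directly from the matrix secant inequality \eqref{eq:IncreasingSecant}: derive the three-slopes monotonicity of the matrix difference quotients, show each quotient is monotone in its variable endpoint, and pass to the limit using the fact recorded after \eqref{eq:LownerOrderViaQuadForm} that the L\"{o}wner order is closed under limits. This route is more computational but self-contained; I expect the limit-passing step, justified by that closedness property, to be the main technical obstacle there.
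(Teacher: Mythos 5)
Your proof is correct, but your primary route is genuinely different from the paper's. The paper disposes of the corollary in two lines by passing to the limit directly in the matrix secant inequality \eqref{eq:IncreasingSecant}: letting $x \to x_2^-$ and then $x \to x_1^+$ yields \eqref{eq:DDInequal}, while letting $x_1 \to x^-$ and $x_2 \to x^+$ yields \eqref{eq:DDMonotone}, the limit passage being licensed by the closedness of $\preceq$ under limits recorded right after \eqref{eq:LownerOrderViaQuadForm}. This is exactly the ``alternative'' you sketch at the end, and the technical obstacle you anticipate there is in fact a non-issue given that closedness property. Your main argument instead scalarizes: it applies the classical one-dimensional result to each convex function $F_z$ and transfers back through the identity $(F_z)'_{\pm}(x) = \langle z, F'_{\pm}(x) z \rangle$, which you correctly justify from the existence of the matrix one-sided derivatives (Theorem~\ref{thrm:ConvexImpliesDC_Lipschitz}) and continuity of $A \mapsto \langle z, A z \rangle$. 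What each approach buys: the paper's route is the natural one in context, since the secant lemma was proved immediately beforehand and the corollary really is a corollary of it; your route bypasses that lemma entirely, needing only convexity of the scalarizations, and it follows the scalarize--apply--de-scalarize template via \eqref{eq:LownerOrderViaQuadForm} that the paper itself exploits elsewhere (e.g.\ in Lemma~\ref{lem:SubdiffQuadFormCharacterization}). Your explicit chaining argument for the final monotonicity assertion, using transitivity of $\preceq$, fills in a step the paper leaves implicit.
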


\begin{proof}
Passing to the limit in \eqref{eq:IncreasingSecant} as $x$ tends to $x_2$ from the left and then as $x$ tends to $x_1$
from the right one obtains that inequalities \eqref{eq:DDInequal} are valid. In turn, passing to the limit in
\eqref{eq:IncreasingSecant} as $x_1$ tends to $x$ from the left and then as $x_2$ tends to $x$ from the right one gets
that inequality \eqref{eq:DDMonotone} holds true.
\end{proof}

With the use of Corollary~\ref{crlr:ConvexityVsDD} we can provide a simple characterisation of subdifferentials of
convex matrix-valued functions defined on $\mathbb{R}$.

\begin{theorem} \label{thrm:OneDimCase}
Let $F$ be convex. Then for any $x \in \mathbb{R}$ the subdifferential of $F$ at $x$ is nonempty and satisfies the
following equality:
\begin{equation} \label{eq:SubdiffViaDD}
  \partial F(x) = \Big\{ V \in \mathbb{S}^{\ell} \Bigm| F'_-(x) \preceq V \preceq F'_+(x) \Big\}.
\end{equation}
In particular, $F'_+(x) \in \partial F(x)$ and $F'_-(x) \in \partial F(x)$. 
\end{theorem}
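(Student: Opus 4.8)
The plan is to establish the two inclusions in \eqref{eq:SubdiffViaDD} separately and then deduce nonemptiness of $\partial F(x)$ as an immediate consequence. Since here $d = 1$, a subgradient is simply a single matrix $V \in \mathbb{S}^{\ell}$, and the defining condition reads $F(y) - F(x) \succeq (y - x) V$ for all $y \in \mathbb{R}$. Throughout I would rely on two facts already available: the possibility of passing to the limit in L\"{o}wner inequalities (noted after \eqref{eq:LownerOrderViaQuadForm}) and the secant/derivative estimates collected in Corollary~\ref{crlr:ConvexityVsDD}.

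For the inclusion $\subseteq$, I would take $V \in \partial F(x)$ and feed the defining inequality $F(y) - F(x) \succeq (y - x) V$ with $y$ approaching $x$ from either side. For $y > x$, dividing by $y - x > 0$ (legitimate by the scaling property in \eqref{eq:LoewnerOrderProp}) gives $\frac{1}{y - x}(F(y) - F(x)) \succeq V$; letting $y \to x^+$ and passing to the limit in the inequality yields $F'_+(x) \succeq V$. For $y < x$, rewriting the defining inequality as $F(x) - F(y) \preceq (x - y) V$, dividing by $x - y > 0$, and letting $y \to x^-$ yields $F'_-(x) \preceq V$. Hence $F'_-(x) \preceq V \preceq F'_+(x)$.

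For the reverse inclusion $\supseteq$, I would assume $F'_-(x) \preceq V \preceq F'_+(x)$ and verify the subgradient inequality for every $y$. The case $y = x$ is trivial. For $y > x$, the first inequality in \eqref{eq:DDInequal} (with $x_1 = x$, $x_2 = y$) gives $F'_+(x) \preceq \frac{1}{y - x}(F(y) - F(x))$; combining with $V \preceq F'_+(x)$ by transitivity and multiplying by $y - x > 0$ gives $(y - x) V \preceq F(y) - F(x)$, as required. For $y < x$, the second inequality in \eqref{eq:DDInequal} (with $x_1 = y$, $x_2 = x$) gives $\frac{1}{x - y}(F(x) - F(y)) \preceq F'_-(x)$; combining with $F'_-(x) \preceq V$ and multiplying by $x - y > 0$ gives $F(x) - F(y) \preceq (x - y) V$, which is exactly $F(y) - F(x) \succeq (y - x) V$. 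This establishes \eqref{eq:SubdiffViaDD}.

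Finally, nonemptiness and the membership of the one-sided derivatives come for free: by the monotonicity relation \eqref{eq:DDMonotone} we have $F'_-(x) \preceq F'_+(x)$, so both $V = F'_+(x)$ and $V = F'_-(x)$ satisfy $F'_-(x) \preceq V \preceq F'_+(x)$ and therefore belong to $\partial F(x)$ by the inclusion $\supseteq$ just proved. I do not expect a genuinely hard step here; the only real care is the bookkeeping of signs when dividing L\"{o}wner inequalities by negative quantities (handled by first moving terms so that one divides only by positive numbers) and the justification of passing to the limit in the inequalities of the first inclusion, which is precisely the limit stability of the order recorded after \eqref{eq:LownerOrderViaQuadForm}.
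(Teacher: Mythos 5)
Your proof is correct and uses essentially the same ingredients as the paper: the secant inequalities \eqref{eq:DDInequal}, the monotonicity relation \eqref{eq:DDMonotone}, and limit stability of the L\"{o}wner order, with the same division-by-positive-quantity bookkeeping. The only difference is organisational --- the paper first proves $F'_+(x), F'_-(x) \in \partial F(x)$ and then uses these memberships to get the inclusion $\supseteq$, whereas you prove $\supseteq$ directly for an arbitrary sandwiched $V$ and recover the memberships (and nonemptiness) as special cases via \eqref{eq:DDMonotone}, which is a harmless streamlining of the same argument.
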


\begin{proof}
Choose any $x, y \in \mathbb{R}$. If $y > x$, then applying the first inequality in \eqref{eq:DDInequal} with $x_1 = x$
and $x_2 = y$ one gets
\begin{equation} \label{eq:RHsideDerSubgrad}
  F(y) - F(x) \succeq (y - x) F'_+(x).
\end{equation}
In turn, if $y < x$, then applying the second inequality in \eqref{eq:DDInequal} with $x_2 = x$ and  $x_1 = y$ and
inequality 
\eqref{eq:DDMonotone} one obtains
\begin{equation} \label{eq:LH_RHDerSubgrad}
  \frac{1}{x - y} \big( F(x) - F(y) \big) \preceq F'_-(x) \preceq F'_+(x)
\end{equation}
Multiplying this inequality by $x - y$ and rearranging the terms one gets that inequality \eqref{eq:RHsideDerSubgrad}
holds true. Thus, inequality \eqref{eq:RHsideDerSubgrad} is satisfied for all $y \in \mathbb{R}$, which implies that the
subdifferential $\partial F(x)$ is nonempty and $F'_+(x) \in \partial F(x)$.

From inequalities \eqref{eq:RHsideDerSubgrad} and \eqref{eq:DDMonotone} it follows that 
$F(y) - F(x) \succeq (y - x) F'_-(x)$ in the case $y > x$. The validity of this inequality in the case $y < x$ follows
directly from the first inequality in \eqref{eq:LH_RHDerSubgrad}. Consequently, $F'_-(x) \in \partial F(x)$.

Let us now prove equality \eqref{eq:SubdiffViaDD}. Indeed, fix any $V \in \partial F(x)$. By definition
$F(y) - F(x) \succeq (y - x) V$ for any $y \in \mathbb{R}$. Choosing $y > x$, dividing this inequality by $y - x$, and
passing to the limit as $y$ tends to $x$ from the right one gets that $F'_+(x) \succeq V$. Similarly, choosing $y < x$,
dividing by $y - x$, and passing to the limit as $y$ tends to $x$ from the left one obtains $V \succeq F'_-(x)$, which
implies that
\[
  \partial F(x) \subseteq \Big\{ V \in \mathbb{S}^{\ell} \Bigm| F'_-(x) \preceq V \preceq F'_+(x) \Big\}. 
\]
Let us prove the converse inclusion. Choose any $V$ from the set on the right-hand of the inclusion above. Then for
any $y > x$ one has $(y - x) F'_+(x) \succeq (y - x) V$. Hence with the use of the definition of subgradient and the
fact that $F'_+(x) \in \partial F(x)$ one gets that for any $y > x$ the following inequality holds true:
\begin{equation} \label{eq:OneDimSubgrad}
  F(y) - F(x) \succeq (y - x) V
\end{equation}
In turn, for any $y < x$ one has $(y - x) V \preceq (y - x) F'_-(x) \preceq F(y) - F(x)$, since 
$F'_-(x) \in \partial F(x)$, which means that inequality \eqref{eq:OneDimSubgrad} is satisfied for any $y < x$
and $V \in \partial F(x)$.
\end{proof}

\begin{remark}
The previous theorem provides a simple rule for computing subgradients of convex functions 
$G \colon \mathbb{R} \to \mathbb{S}^{\ell}$. Namely, one simply needs to compute, say, right-hand derivatives of each
component $G_{ij}$ of $G$ at a given point $x$ and define $V = G'_+(x)$. Then $V \in \partial G(x)$.
\end{remark}

Corollary~\ref{crlr:ConvexityVsDD} can also be used to reveal some interesting differentiability properties of a convex
matrix-valued function. In particular, this corollary implies that if all diagonal elements of such function are
differentiable, then this function is differentiable as well.

\begin{theorem} \label{thrm:Smoothness}
Let $F$ be convex and for some $i \in \{ 1, \ldots, \ell \}$ the function $F_{ii}$ be differentiable at a point 
$x \in \mathbb{R}$. Then for all $j \in \{ 1, \ldots, \ell \}$ the functions $F_{ij} = F_{ji}$ are differentiable at $x$
as well.
\end{theorem}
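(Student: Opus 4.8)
The plan is to reduce the claim to a single statement about the matrix $M := F'_+(x) - F'_-(x) \in \mathbb{S}^{\ell}$ and then to invoke an elementary fact about positive semidefinite matrices. First I would recall that, since $F$ is convex, Theorem~\ref{thrm:ConvexImpliesDC_Lipschitz} guarantees that $F$ is directionally differentiable, so both one-sided derivatives $F'_+(x)$ and $F'_-(x)$ exist as elements of $\mathbb{S}^{\ell}$. Because differentiation of a matrix-valued function acts entrywise, the $(p,q)$-entry of $F'_\pm(x)$ is exactly the one-sided derivative $(F_{pq})'_\pm(x)$. In particular, a component $F_{pq}$ is differentiable at $x$ precisely when the $(p,q)$-entries of $F'_+(x)$ and $F'_-(x)$ agree, i.e. when $M_{pq} = 0$. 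By Corollary~\ref{crlr:ConvexityVsDD} (inequality \eqref{eq:DDMonotone}) we have $F'_-(x) \preceq F'_+(x)$, so $M \succeq \mathbb{O}_{\ell \times \ell}$, that is, $M$ is positive semidefinite. The hypothesis that $F_{ii}$ is differentiable at $x$ thus translates into $M_{ii} = 0$, and the desired conclusion is exactly $M_{ij} = M_{ji} = 0$ for every $j \in \{ 1, \ldots, \ell \}$.

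The crux is therefore the following purely linear-algebraic fact: if $M \in \mathbb{S}^{\ell}$ is positive semidefinite and $M_{ii} = 0$ for some index $i$, then the entire $i$-th row and column of $M$ vanish. I would prove this by fixing $j \ne i$ and testing positive semidefiniteness against the vector $z = e_i + t e_j$ for $t \in \mathbb{R}$, where $e_i, e_j$ are canonical basis vectors of $\mathbb{R}^{\ell}$. Using $M_{ii} = 0$ and the symmetry of $M$, one obtains $\langle z, M z \rangle = 2 t M_{ij} + t^2 M_{jj} \ge 0$ for all $t \in \mathbb{R}$. Since this is a quadratic in $t$ with no constant term that must remain nonnegative on the whole line, its linear coefficient must vanish, forcing $M_{ij} = 0$; by symmetry $M_{ji} = 0$ as well. (Equivalently, one may observe that the $2 \times 2$ principal submatrix indexed by $\{ i, j \}$ is positive semidefinite, hence has nonnegative determinant $- M_{ij}^2 \ge 0$.)

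Combining the two pieces finishes the argument: $M \succeq \mathbb{O}_{\ell \times \ell}$ together with $M_{ii} = 0$ gives $M_{ij} = M_{ji} = 0$ for all $j$, so the $(i,j)$- and $(j,i)$-entries of $F'_+(x)$ and $F'_-(x)$ coincide, and hence $F_{ij} = F_{ji}$ is differentiable at $x$. I expect the main (though modest) obstacle to be cleanly justifying the entrywise interpretation of the one-sided matrix derivatives and the equivalence between scalar differentiability of $F_{pq}$ and the vanishing of the corresponding entry of $M$; the positive-semidefinite lemma itself is standard. No appeal to the subdifferential description \eqref{eq:SubdiffViaDD} is needed, although one could phrase the same computation in terms of the entries of $\partial F(x)$ being forced to a single value.
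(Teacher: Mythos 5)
Your proof is correct and follows essentially the same route as the paper: both rely on $F'_-(x) \preceq F'_+(x)$ from Corollary~\ref{crlr:ConvexityVsDD} and then test the positive semidefinite matrix $M = F'_+(x) - F'_-(x)$ against vectors supported on the coordinates $\{i,j\}$, concluding that a quadratic in $t$ with a nonvanishing linear term cannot stay nonnegative. The paper phrases this as a proof by contradiction with the parameterisation $z_j = 1$, $z_i = t\sign(M_{ij})$, whereas you isolate the standard fact that a PSD matrix with a zero diagonal entry has zero row and column, but this is only a cosmetic difference.
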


\begin{proof}
Denote $A = F'_+(x)$ and $B = F'_-(x)$. By our assumption $A_{ii} = B_{ii}$. Our aim is to show that $A_{ij} = B_{ij}$
for all $j \in \{ 1, \ldots, \ell \}$. Then one can conclude that the corresponding functions $F_{ij}$ are
differentiable at $x$.

Suppose by contradiction that $A_{ij} \ne B_{ij}$ for some $j \in \{ 1, \ldots, \ell \}$. Note that $A \succeq B$, i.e.
the matrix $A - B$ is positive semidefinite, by Corollary~\ref{crlr:ConvexityVsDD}. Consequently, for any vector 
$z \in \mathbb{R}^{\ell}$ with $z_k = 0$ for all $k \in \{ 1, \ldots, \ell \} \setminus \{ i, j \}$ one has
\[
  0 \le \langle z, (A - B) z \rangle = 2 (A_{ij} - B_{ij}) z_i z_j + (A_{jj} - B_{jj}) z_j^2.
\]
Hence for $z_j = 1$ and $z_i = t \sign(A_{ij} - B_{ij})$ one has $2 |A_{ij} - B_{ij}| t + A_{jj} - B_{jj} \ge 0$ for
all $t \in \mathbb{R}$, which is obviously impossible.
\end{proof}

\begin{corollary} \label{crlr:Smoothness}
Let $F$ be convex and for all $i \in \{ 1, \ldots, \ell \}$ the functions $F_{ii}$ be differentiable at a point 
$x \in \mathbb{R}$. Then $F$ is differentiable at $x$.
\end{corollary}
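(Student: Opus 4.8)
The plan is to read off the corollary from Theorem~\ref{thrm:Smoothness} by invoking it once for each diagonal index. First I would recall that a matrix-valued function $F \colon \mathbb{R} \to \mathbb{S}^{\ell}$ of a single real variable is differentiable at $x$ if and only if each of its entries $F_{ij}$, $i, j \in \{ 1, \ldots, \ell \}$, is differentiable at $x$, since for a function of one real variable the derivative is computed entrywise and the limit defining $F'(x)$ exists exactly when all the entrywise limits exist. Hence it suffices to establish the differentiability of every entry $F_{ij}$ at $x$.

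To do this, fix an arbitrary pair $i, j$. Since by hypothesis \emph{all} diagonal elements are differentiable at $x$, in particular $F_{ii}$ is differentiable at $x$, so Theorem~\ref{thrm:Smoothness} applied with this very index $i$ yields that $F_{ij} = F_{ji}$ is differentiable at $x$. As the pair $i, j$ was arbitrary, every entry of $F$ is differentiable at $x$, and therefore $F$ itself is differentiable at $x$.

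The one point deserving care — and the reason the hypothesis requires all diagonal elements to be differentiable rather than just one — is that a single application of Theorem~\ref{thrm:Smoothness} with a fixed index $i$ only controls the $i$-th row and column, i.e. the entries $F_{ij}$ with $j$ ranging over $\{ 1, \ldots, \ell \}$; it says nothing about an off-diagonal entry $F_{kj}$ with both $k \ne i$ and $j \ne i$. Such an entry is instead reached by invoking the theorem with index $k$, using the differentiability of $F_{kk}$. Running through every diagonal index in this way sweeps out the whole matrix, so no additional estimates beyond Theorem~\ref{thrm:Smoothness} are needed; there is no genuine obstacle here, only the bookkeeping observation that one must exhaust all diagonal indices.
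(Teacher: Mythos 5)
Your proof is correct and is essentially the argument the paper intends: the corollary is stated as an immediate consequence of Theorem~\ref{thrm:Smoothness}, obtained exactly as you do by applying that theorem once for each diagonal index and then using the fact that, for a function of one real variable with values in $\mathbb{S}^{\ell}$, differentiability is equivalent to entrywise differentiability. Your closing remark about why a single index does not suffice is a sound piece of bookkeeping, and nothing further is needed.
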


Thus, points of nonsmoothness of the diagonal elements of a convex matrix-valued function must be consistent
with points of nonsmoothness of non-diagonal elements from the corresponding row and column. For example, the
function $F \colon \mathbb{R} \to \mathbb{S}^2$, 
$F(x) = \left( \begin{smallmatrix} F_{11}(x) & |x| \\ |x| & F_{22}(x) \end{smallmatrix} \right)$ 
cannot be convex, if either of the functions $F_{11}$ or $F_{22}$ are differentiable at zero.

\begin{remark}
Theorem~\ref{thrm:Smoothness} and Corollary~\ref{crlr:Smoothness} admit obvious extensions to the case of convex
matrix-valued functions defined on $\mathbb{R}^d$. In particular, let $F \colon \mathbb{R}^d \to \mathbb{S}^{\ell}$ be a
convex function and for all $i \in \{ 1, \ldots, \ell \}$ the functions $F_{ii}$ be differentiable in $x^{(k)}$ at a
point $x \in \mathbb{R}^d$ for some $k \in \{ 1, \ldots, d \}$. Then applying Corollary~\ref{crlr:Smoothness} to the
mapping $x^{(k)} \mapsto F(x)$ one obtains that $F$ is differentiable in $x^{(k)}$ as well.
\end{remark}

\section{Computing subdifferentials: multivariate case}
\label{sect:MultiDimCase}

Let us now consider the multi-dimensional case. Suppose that a convex function 
$F \colon \mathbb{R}^d \to \mathbb{S}^{\ell}$ is given. By Theorem~\ref{thrm:ConvexImpliesDC_Lipschitz} the function $F$
is locally Lipschitz continuous. Therefore, by Rademacher's theorem $F$ is almost everywhere Fr\'{e}chet differentiable
and one can define its Clarke subdifferential (generalised Jacobian) as follows:
\[
  \partial_{Cl} F(x) = \co\Big\{ V \in (\mathbb{S}^{\ell})^d \Bigm| \exists \{ x_n \} \subset \Omega_F \colon
  \lim_{n \to \infty} x_n = x, \: \lim_{n \to \infty} F'(x_n) = V \Big\},
\]
where $\Omega_F$ is the set of points $x \in \mathbb{R}^d$ at which $F$ is Fr\'{e}chet differentiable. From the general
properties of the Clarke subdifferentials/generalised Jacobians of locally Lipschitz mappings (see \cite{Clarke}) it
follows that the Clarke subdifferential of a convex matrix-valued function possesses the following properties. 

\begin{proposition}
For any $x \in \mathbb{R}^d$ the Clarke subdifferential $\partial_{Cl} F(x)$ is correctly defined, nonempty, convex, and
compact.
\end{proposition}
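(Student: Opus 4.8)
The plan is to reduce the statement entirely to the standard theory of Clarke's generalised Jacobian for locally Lipschitz mappings between finite-dimensional Euclidean spaces, as developed in \cite{Clarke}. The only hypothesis that must be verified before that theory applies is local Lipschitz continuity of $F$, and this is already supplied by Theorem~\ref{thrm:ConvexImpliesDC_Lipschitz}. Observe that $\mathbb{S}^{\ell}$, equipped with the Frobenius inner product, is a finite-dimensional Euclidean space, so $F$ may be regarded as a locally Lipschitz mapping between the Euclidean spaces $\mathbb{R}^d$ and $\mathbb{S}^{\ell}$, while the Jacobians $F'(x_n)$ live in the finite-dimensional space $(\mathbb{S}^{\ell})^d$. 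Thus no feature specific to the matrix-valued setting or to the L\"{o}wner order intervenes, and the four assertions follow by the same arguments as in the ordinary vector-valued case.

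For well-definedness and nonemptiness I would argue as follows. By Rademacher's theorem the set $\Omega_F$ of Fr\'{e}chet differentiability points has full Lebesgue measure, so for every $x$ there exist sequences $\{x_n\} \subset \Omega_F$ with $x_n \to x$. Fixing a neighbourhood of $x$ on which $F$ is Lipschitz with some constant $L$, the derivatives $F'(x_n)$ are uniformly bounded: each component matrix $(F'(x_n))^{(i)}$ satisfies $\| (F'(x_n))^{(i)} \|_F \le L$ for $n$ large. By the Bolzano--Weierstrass theorem the bounded sequence $\{F'(x_n)\}$ admits a convergent subsequence, whose limit belongs to the set inside the convex hull in the definition of $\partial_{Cl} F(x)$. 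Hence that set, and therefore $\partial_{Cl} F(x)$ itself, is nonempty; convexity is immediate, since $\partial_{Cl} F(x)$ is by construction a convex hull.

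For compactness I would first establish that the pre-hull set
\[
  \partial_B F(x) = \Big\{ V \in (\mathbb{S}^{\ell})^d \Bigm| \exists \{ x_n \} \subset \Omega_F \colon x_n \to x, \: F'(x_n) \to V \Big\}
\]
is compact, and then pass to its convex hull. Boundedness of $\partial_B F(x)$ follows from the same uniform bound $L$ used above, since each of its elements is a limit of derivatives bounded by $L$. Closedness follows from a standard diagonal extraction: given $V_k \in \partial_B F(x)$ with $V_k \to V$ and, for each $k$, a sequence from $\Omega_F$ realising $V_k$, one selects a diagonal sequence $\{y_m\} \subset \Omega_F$ with $y_m \to x$ and $F'(y_m) \to V$, whence $V \in \partial_B F(x)$. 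Being closed and bounded in the finite-dimensional space $(\mathbb{S}^{\ell})^d$, the set $\partial_B F(x)$ is compact, and by Carath\'{e}odory's theorem the convex hull of a compact set in finite dimensions is again compact; this yields compactness of $\partial_{Cl} F(x)$.

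The only step requiring genuine care is the passage from the scalar Lipschitz constant to the uniform bound $\| (F'(x_n))^{(i)} \|_F \le L$ on the individual component matrices, together with the routine but slightly delicate diagonal argument for closedness. Neither presents a real obstacle, however: both are entirely standard ingredients in the construction of the generalised Jacobian. In effect the whole proposition is obtained by verifying the local Lipschitz hypothesis via Theorem~\ref{thrm:ConvexImpliesDC_Lipschitz} and then invoking the general results of \cite{Clarke}.
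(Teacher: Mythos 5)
Your proposal is correct and follows essentially the same route as the paper: the paper simply notes that $F$ is locally Lipschitz by Theorem~\ref{thrm:ConvexImpliesDC_Lipschitz} and then invokes the standard theory of generalised Jacobians from \cite{Clarke}, which is exactly your reduction. The only difference is that you unpack the standard arguments (Rademacher, uniform bound on the partial derivatives by the Lipschitz constant, diagonal extraction for closedness, Carath\'{e}odory for compactness of the convex hull) that the paper leaves to the citation, and these details are all accurate.
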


With the use of this proposition we can prove that the subdifferential of a convex matrix-valued function is always
nonempty and one can compute its subgradients in the sense of convex analysis by computing its Clarke subgradients.

\begin{theorem} \label{thrm:ConvAnalSubdiff_vs_ClarkSubdiff}
For any $x \in \mathbb{R}^d$ the subdifferential of $F$ at $x$ in the sense of convex analysis is not empty and
$\partial_{Cl} F(x) \subseteq \partial F(x)$.
\end{theorem}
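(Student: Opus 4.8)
The plan is to derive both assertions from results already established, reducing everything to an application of outer semicontinuity. Observe first that, by the preceding proposition, the Clarke subdifferential $\partial_{Cl} F(x)$ is nonempty; hence once the inclusion $\partial_{Cl} F(x) \subseteq \partial F(x)$ is proved, the nonemptiness of $\partial F(x)$ follows immediately. Thus the entire theorem reduces to establishing this one inclusion. Since $\partial F(x)$ is convex by Theorem~\ref{thrm:SubdiffConvexCompact}, and $\partial_{Cl} F(x)$ is by definition the convex hull of the set
\[
  S = \Big\{ V \in (\mathbb{S}^{\ell})^d \Bigm| \exists \{ x_n \} \subset \Omega_F \colon
  \lim_{n \to \infty} x_n = x, \: \lim_{n \to \infty} F'(x_n) = V \Big\},
\]
it suffices to show that $S \subseteq \partial F(x)$: the convex hull of a subset of the convex set $\partial F(x)$ is again contained in $\partial F(x)$.

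The key step is therefore to prove $S \subseteq \partial F(x)$, and this is where the earlier machinery does the work. First I would take an arbitrary $V \in S$, together with a sequence $\{ x_n \} \subset \Omega_F$ with $x_n \to x$ and $F'(x_n) \to V$. For each $n$ the point $x_n$ lies in $\Omega_F$, meaning that $F$ is Fr\'{e}chet (hence G\^{a}teaux) differentiable at $x_n$; by Proposition~\ref{prp:SubdiffAtDiffPoint} this yields $F'(x_n) \in \partial F(x_n)$. Setting $V_n = F'(x_n)$, I thus obtain a sequence of subgradients $V_n \in \partial F(x_n)$ with $x_n \to x$ and $V_n \to V$, which is exactly the situation describing membership of $V$ in the outer limit $\limsup_{y \to x} \partial F(y)$.

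It then remains only to invoke Theorem~\ref{thrm:OuterSemiContin}, according to which $\limsup_{y \to x} \partial F(y) \subseteq \partial F(x)$; this gives $V \in \partial F(x)$ and hence $S \subseteq \partial F(x)$. Combining this with the convexity of $\partial F(x)$ as noted above yields $\partial_{Cl} F(x) = \co S \subseteq \partial F(x)$, and the nonemptiness of $\partial F(x)$ follows from that of $\partial_{Cl} F(x)$. There is no genuine obstacle here beyond assembling the pieces in the right order: the conceptual content is entirely carried by Proposition~\ref{prp:SubdiffAtDiffPoint} (each classical derivative is a convex-analytic subgradient) and by the outer semicontinuity of $\partial F(\cdot)$, which together let the limiting process defining the Clarke object land inside the convex-analytic subdifferential. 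The only point that warrants care is checking that the generating set $S$ of the Clarke subdifferential is precisely of the form to which the outer-limit definition applies, i.e. that the approximating points may be taken in $\Omega_F$ so that Proposition~\ref{prp:SubdiffAtDiffPoint} is applicable at each of them.
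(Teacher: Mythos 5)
Your proof is correct and follows essentially the same route as the paper: apply Proposition~\ref{prp:SubdiffAtDiffPoint} at each differentiability point $x_n$ and then pass to the limit using the outer semicontinuity of $\partial F(\cdot)$ (Theorem~\ref{thrm:OuterSemiContin}). In fact you are slightly more careful than the paper's own proof, which takes an arbitrary $V \in \partial_{Cl} F(x)$ and treats it directly as a limit of derivatives $F'(x_n)$ --- literally valid only for the generating set $S$, not for its convex hull --- whereas you handle the convex-hull step explicitly by invoking the convexity of $\partial F(x)$ from Theorem~\ref{thrm:SubdiffConvexCompact}.
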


\begin{proof}
Let $V \in \partial_{Cl} F(x)$. Then by definition there exists a sequence $\{ x_n \} \subset \mathbb{R}^d$ converging
to $x$ and such that $F$ is Fr\'{e}chet differentiable at each point $x_n$ and $F'(x_n) \to V$ as $n \to \infty$. By
Proposition~\ref{prp:SubdiffAtDiffPoint} $F'(x_n) \in \partial F(x_n)$, which due to the outer semicontinuity of the
subdifferential map (Theorem~\ref{thrm:OuterSemiContin}) implies that $V \in \partial F(x)$. Thus, the subdifferential
$\partial F(x)$ is nonempty and $\partial_{Cl} F(x) \subseteq \partial F(x)$.
\end{proof}

As we will show in the following section, the inclusion from the theorem above can be strict. Nonetheless, it allows
one to compute subgradients of convex matrix-valued functions by using standard rules for computing generalised
Jacobians \cite{Clarke}.

\section{Subdifferential calculus}
\label{sect:SubdiffCalc}

Let us finally present some simple calculus rules for subdifferentials of convex matrix-valued functions. 
Let $F \colon \mathbb{R}^d \to \mathbb{S}^{\ell}$ be a convex function. First, note that for any 
$A \in \mathbb{S}^{\ell}$ one obviously has $\partial (F + A)(\cdot) = \partial F(\cdot)$. Next we study how
subdifferentials change under matrix transformations.

\begin{theorem}
For any matrix $M \in \mathbb{R}^{m \times \ell}$ the function $F_M(\cdot) := M F(\cdot) M^T$ is convex and 
\begin{equation} \label{eq:MatrixTransformSubdiff}
  M \partial F(\cdot) M^T
  := \Big\{ (M V^{(1)} M^T, \ldots M V^{(d)} M^T) \in (\mathbb{S}^m)^d \Bigm| V \in \partial F(\cdot) \Big\}
  \subseteq \partial F_M(\cdot).
\end{equation}
Moreover, if $m = \ell$ and the matrix $M$ is invertible, then this inclusion is satisfied as equality.
\end{theorem}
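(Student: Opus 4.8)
The plan is to reduce everything to a single elementary fact: congruence by a fixed matrix preserves the L\"{o}wner order. Concretely, for any $A, B \in \mathbb{S}^{\ell}$ with $A \preceq B$ and any $M \in \mathbb{R}^{m \times \ell}$, one has $M A M^T \preceq M B M^T$, because for every $w \in \mathbb{R}^m$ the quantity $\langle w, M (B - A) M^T w \rangle = \langle M^T w, (B - A) M^T w \rangle$ is nonnegative by \eqref{eq:LownerOrderViaQuadForm} (take $z = M^T w$). This observation, together with the linearity of the map $X \mapsto M X M^T$, is the only tool I will need.

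First I would establish the convexity of $F_M$. Since the congruence map commutes with convex combinations, applying it to the matrix convexity inequality for $F$ and invoking the order-preservation just noted immediately yields $F_M(\alpha x_1 + (1 - \alpha) x_2) \preceq \alpha F_M(x_1) + (1 - \alpha) F_M(x_2)$ for all $x_1, x_2 \in \mathbb{R}^d$ and $\alpha \in [0, 1]$, so $F_M$ is convex and $\partial F_M$ is well defined.

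For the inclusion \eqref{eq:MatrixTransformSubdiff}, I would take an arbitrary $V = (V^{(1)}, \ldots, V^{(d)}) \in \partial F(x)$, write out the defining inequality $F(y) - F(x) \succeq \sum_{i = 1}^d (y^{(i)} - x^{(i)}) V^{(i)}$ from \eqref{eq:SubdiffDef}, and apply the congruence $X \mapsto M X M^T$ to both sides. Order-preservation turns this into $F_M(y) - F_M(x) \succeq \sum_{i = 1}^d (y^{(i)} - x^{(i)}) M V^{(i)} M^T$ for all $y \in \mathbb{R}^d$, where linearity has been used to pull $M (\cdot) M^T$ through the sum. By \eqref{eq:SubdiffDef} this says precisely that $(M V^{(1)} M^T, \ldots, M V^{(d)} M^T) \in \partial F_M(x)$, which is the claimed inclusion.

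Finally, for the equality in the invertible case I would exploit the inclusion just proved in a self-referential way rather than redo any work. When $m = \ell$ and $M$ is invertible, the congruence $X \mapsto M X M^T$ is a bijection of $\mathbb{S}^{\ell}$ whose inverse is the congruence $X \mapsto M^{-1} X (M^{-1})^T$ (using $(M^T)^{-1} = (M^{-1})^T$), and moreover $F = M^{-1} F_M (M^{-1})^T$. Applying the already-established inclusion to the convex function $F_M$ and the matrix $M^{-1}$ gives $M^{-1} \partial F_M(x) (M^{-1})^T \subseteq \partial\big( M^{-1} F_M (M^{-1})^T \big)(x) = \partial F(x)$. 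Hence for any $W \in \partial F_M(x)$ the tuple $U$ with $U^{(i)} = M^{-1} W^{(i)} (M^{-1})^T$ lies in $\partial F(x)$, and since $M U^{(i)} M^T = W^{(i)}$ we conclude $W \in M \partial F(x) M^T$. This yields $\partial F_M(x) \subseteq M \partial F(x) M^T$ and therefore equality. The only point requiring care is verifying that the inverse of a congruence is again a congruence of the same form, which is exactly what makes this symmetric argument close up; beyond bookkeeping of transposes I expect no real obstacle.
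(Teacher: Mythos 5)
Your proposal is correct, and for the convexity of $F_M$ and the inclusion \eqref{eq:MatrixTransformSubdiff} it follows the paper's argument essentially verbatim: your order-preservation lemma for the congruence $X \mapsto M X M^T$ is exactly the paper's inline computation $\langle h, M ( \cdot ) M^T h \rangle = \langle M^T h, ( \cdot ) M^T h \rangle$ with the substitution $z = M^T h$. Where you genuinely diverge is the equality in the invertible case. The paper proves the reverse inclusion by direct verification: given $W \in \partial F_M(x)$ it sets $V^{(i)} = M^{-1} W^{(i)} (M^T)^{-1}$, expands the subgradient inequality for $W$, and then uses the surjectivity of $h \mapsto M^T h$ (this is where invertibility enters in the paper) to conclude that the quadratic-form inequality defining $V \in \partial F(x)$ holds for every $z \in \mathbb{R}^{\ell}$. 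You instead bootstrap: since the forward inclusion was proved for an arbitrary convex function and an arbitrary rectangular matrix, you apply it to the pair $(F_M, M^{-1})$, and the identity $F = M^{-1} F_M (M^{-1})^T$ (this is where invertibility enters in your argument) yields $M^{-1} \partial F_M(x) (M^{-1})^T \subseteq \partial F(x)$, whence every $W \in \partial F_M(x)$ has the form $W = M U M^T$ with $U \in \partial F(x)$. Your route avoids repeating any computation and lets the symmetry of congruences close the argument; the paper's route is self-contained at that point and shows explicitly that invertibility is what makes every test vector $z$ reachable as $M^T h$. Both are complete; there is no gap in your argument.
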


\begin{proof}
By the definition of convexity and the L\"{o}wner partial order for any $x_1, x_2 \in \mathbb{R}^d$ and 
$\alpha \in [0, 1]$ one has
\[
  \langle z, \Big(\alpha F(x_1) + (1 - \alpha) F(x_2) - F(\alpha x_1 + (1 - \alpha) x_2)\Big) z \rangle \ge 0 
  \quad \forall z \in \mathbb{R}^{\ell}.
\]
Consequently, for any $h \in \mathbb{R}^m$ one has
\begin{multline*}
  \langle h, \Big(\alpha F_M(x_1) + (1 - \alpha) F_M(x_2) - F_M(\alpha x_1 + (1 - \alpha) x_2)\Big) h \rangle
  \\
  = \langle h, M \Big( \alpha F(x_1) + (1 - \alpha) F(x_2) - F(\alpha x_1 + (1 - \alpha) x_2) \Big) M^T h \rangle
  \\
  = \langle z, \Big(\alpha F(x_1) + (1 - \alpha) F(x_2) - F(\alpha x_1 + (1 - \alpha) x_2)\Big) z \rangle \ge 0
\end{multline*}
where $z = M^T h$. In other words, for any $x_1, x_2 \in \mathbb{R}^d$ and $\alpha \in [0, 1]$ one has
$F_M(\alpha x_1 + (1 - \alpha) x_2) \preceq \alpha F_M(x_1) + (1 - \alpha) F_M(x_2)$, i.e. the function $F_M$ is convex.

Fix any $x \in \mathbb{R}^d$ and $V \in \partial F(x)$. By the definition of subdifferential and the L\"{o}wner partial
order one has
\begin{equation} \label{eq:SubgradDirectDef}
  \langle z, \Big( F(y) - F(x) - \sum_{i = 1}^d (y^{(i)} - x^{(i)}) V^{(i)} \Big) z \rangle \ge 0 
  \quad \forall y \in \mathbb{R}^d, \: z \in \mathbb{R}^{\ell}.
\end{equation}
Therefore, for any $y \in \mathbb{R}^d$ and $h \in \mathbb{R}^m$ the following inequalities hold true: 
\begin{multline*}
  \langle h, \Big( F_M(y) - F_M(x) - \sum_{i = 1}^d (y^{(i)} - x^{(i)}) M V^{(i)} M^T \Big) h \rangle 
  \\
  = \langle h, M \Big( F(y) - F(x) - \sum_{i = 1}^d (y^{(i)} - x^{(i)}) V^{(i)} \Big) M^T h \rangle
  \\
  = \langle z, \Big( F(y) - F(x) - \sum_{i = 1}^d (y^{(i)} - x^{(i)}) V^{(i)} \Big) z \rangle \ge 0
\end{multline*}
where $z = M^T h$. Thus, one has $F_M(y) - F_M(x) \succeq \sum_{i = 1}^d (y^{(i)} - x^{(i)}) M V^{(i)} M^T$ for any 
$y \in \mathbb{R}^d$ or, equivalently, $(M V^{(1)} M^T, \ldots M V^{(d)} M^T) \in \partial F_M(x)$, that is, inclusion
\eqref{eq:MatrixTransformSubdiff} is valid. 

Suppose, finally, that $m = \ell$ and $M$ is invertible. Choose any $W \in \partial F_M(x)$. By definitions
\begin{equation} \label{eq:MatrixTransformSubgradDirectDef}
  \langle h, \Big( F_M(y) - F_M(x) - \sum_{i = 1}^d (y^{(i)} - x^{(i)}) W^{(i)} \Big) h \rangle \ge 0 
  \quad \forall h \in \mathbb{R}^m, \: y \in \mathbb{R}^d.
\end{equation}
For each $i \in \{ 1, \ldots, d \}$ define $V^{(i)} = M^{-1} W^{(i)} (M^T)^{-1}$. Then $W^{(i)} = M V^{(i)} M^T$ and the
inequality above implies that
\begin{align*}
  0 &\le \langle h, M \Big( F(y) - F(x) - \sum_{i = 1}^d (y^{(i)} - x^{(i)}) V^{(i)} \Big) M^T h \rangle
  \\
  &= \langle M^T h, \Big( F(y) - F(x) - \sum_{i = 1}^d (y^{(i)} - x^{(i)}) V^{(i)} \Big) M^T h \rangle
\end{align*}
for any $y \in \mathbb{R}^d$ and $h \in \mathbb{R}^m$. Since $M$ is invertible, for any $z \in \mathbb{R}^{\ell}$ there
exists $h \in \mathbb{R}^m$ such that $z = M^T h$. Consequently, inequality \eqref{eq:SubgradDirectDef} is satisfied or,
equivalently, $V \in \partial F(x)$. Hence bearing in mind the fact that $W = M V M^T$ one can conclude that the
inclusion opposite to \eqref{eq:MatrixTransformSubdiff} holds true, which completes the proof.
\end{proof}

By putting $M = \sqrt{\alpha} I_{\ell}$ with $\alpha > 0$ (the case $\alpha = 0$ is trivial), where $I_{\ell}$ is the
identity matrix of order $\ell$, one obtains the following result.

\begin{corollary}
For any $\alpha \ge 0$ one has $\partial (\alpha F)(\cdot) = \alpha \partial F(\cdot)$.
\end{corollary}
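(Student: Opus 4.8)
The plan is to reduce the statement to the equality clause of the previous theorem, treating the cases $\alpha = 0$ and $\alpha > 0$ separately, since that equality clause requires an invertible transformation matrix and the scaling $M = \sqrt{\alpha}\,I_{\ell}$ is invertible only for $\alpha > 0$.

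First I would dispose of the degenerate case $\alpha = 0$. Here $\alpha F$ is the constant zero function $x \mapsto \mathbb{O}_{\ell \times \ell}$, and a direct check from definition \eqref{eq:SubdiffDef} shows that its subdifferential at every point is the single zero operator $(\mathbb{O}_{\ell \times \ell}, \ldots, \mathbb{O}_{\ell \times \ell})$: the defining inequality $\mathbb{O}_{\ell \times \ell} \succeq \sum_{i} (y^{(i)} - x^{(i)}) V^{(i)}$ must hold for all $y \in \mathbb{R}^d$, and replacing $y$ by $2x - y$ forces the right-hand side to vanish identically, whence every $V^{(i)} = \mathbb{O}_{\ell \times \ell}$. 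On the other hand, since $\partial F(x)$ is nonempty by Theorem~\ref{thrm:ConvAnalSubdiff_vs_ClarkSubdiff}, the set $0 \cdot \partial F(x)$ likewise reduces to the zero operator, so both sides coincide.

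For $\alpha > 0$ I would apply the previous theorem with $M = \sqrt{\alpha}\,I_{\ell}$. This matrix is square ($m = \ell$) and invertible, so the equality clause of the theorem is in force. A one-line computation gives $F_M(\cdot) = \sqrt{\alpha}\,I_{\ell}\, F(\cdot)\, \sqrt{\alpha}\,I_{\ell} = \alpha F(\cdot)$ and $M V^{(i)} M^T = \alpha V^{(i)}$ for every $i$, so that the set $M \partial F(\cdot) M^T$ is precisely $\alpha \partial F(\cdot)$. The equality $M \partial F(\cdot) M^T = \partial F_M(\cdot)$ supplied by the theorem then reads $\alpha \partial F(\cdot) = \partial (\alpha F)(\cdot)$, which is exactly the claim.

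I do not expect any genuine obstacle: the whole content is already contained in the preceding theorem, and the only point demanding care is that its equality clause is unavailable for the non-invertible matrix $M = \mathbb{O}_{\ell \times \ell}$ corresponding to $\alpha = 0$, which is precisely why that case is verified directly rather than through the scaling argument.
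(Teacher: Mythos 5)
Your proof is correct and follows essentially the same route as the paper: the paper also obtains the result by applying the preceding theorem with $M = \sqrt{\alpha}\, I_{\ell}$ for $\alpha > 0$ (where invertibility gives the equality clause) and dismisses $\alpha = 0$ as trivial. Your explicit verification of the $\alpha = 0$ case, including the observation that nonemptiness of $\partial F(x)$ is needed so that $0 \cdot \partial F(x)$ is the zero operator rather than the empty set, is a careful spelling-out of what the paper leaves to the reader.
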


Let us also consider the element-wise (Hadamard) product, denoted by $\odot$, with a constant matrix.

\begin{proposition}
For any positive semidefinite matrix $M \in \mathbb{S}^{\ell}$ the function $G_M(\cdot) = M \odot F(\cdot)$ is convex 
and
\begin{equation} \label{eq:HadamardProductSubdiff}
  M \odot \partial F(\cdot) 
  := \Big\{ (M \odot V^{(1)}, \ldots M \odot V^{(d)}) \in (\mathbb{S}^{\ell})^d \Bigm| V \in \partial F(\cdot) \Big\}
  \subseteq \partial G_M(\cdot)
\end{equation}
If, in addition, all elements of the matrix $M$ are nonzero and the matrix $N \in \mathbb{S}^{\ell}$ with elements 
$N_{ij} = 1 / M_{ij}$ is positive semidefinite, then this inclusion is satisfied as equality.
\end{proposition}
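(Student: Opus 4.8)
The plan is to mirror the proof of the preceding matrix-transformation theorem, replacing the congruence $A \mapsto M A M^T$ by the Hadamard product $A \mapsto M \odot A$ and invoking the Schur product theorem (the Hadamard product of two positive semidefinite matrices is again positive semidefinite) in place of the elementary identity $\langle h, M A M^T h \rangle = \langle M^T h, A M^T h \rangle$. The two operations share exactly the two features that proof relies on: each is linear in $A$, and each preserves the cone $\mathbb{S}^{\ell}_+$. So the whole argument should go through with the Schur product theorem as the only new ingredient.

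For convexity I would start from the fact that $F$ is convex, so that for any $x_1, x_2 \in \mathbb{R}^d$ and $\alpha \in [0, 1]$ the matrix $\alpha F(x_1) + (1 - \alpha) F(x_2) - F(\alpha x_1 + (1 - \alpha) x_2)$ belongs to $\mathbb{S}^{\ell}_+$. Applying the Schur product theorem with the positive semidefinite matrix $M$ keeps the result in $\mathbb{S}^{\ell}_+$, and since $\odot$ distributes over sums and commutes with scalars this matrix equals $\alpha G_M(x_1) + (1 - \alpha) G_M(x_2) - G_M(\alpha x_1 + (1 - \alpha) x_2)$; hence $G_M$ is convex. For the inclusion $M \odot \partial F(x) \subseteq \partial G_M(x)$ I would take $V \in \partial F(x)$, so that $F(y) - F(x) - \sum_{i = 1}^d (y^{(i)} - x^{(i)}) V^{(i)} \in \mathbb{S}^{\ell}_+$ for all $y$, and again apply the Schur product theorem with $M$. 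Linearity of $\odot$ turns $M \odot F$ into $G_M$ and each $M \odot V^{(i)}$ into the $i$-th component of the candidate subgradient, giving $(M \odot V^{(1)}, \ldots, M \odot V^{(d)}) \in \partial G_M(x)$.

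For the reverse inclusion under the extra hypotheses, the key observation is that $N$ is a Hadamard inverse of $M$: since $N_{ij} = 1/M_{ij}$ and all entries $M_{ij}$ are nonzero, the product $M \odot N$ is the matrix all of whose entries equal $1$, which acts as the identity for $\odot$, that is $(M \odot N) \odot A = A$ for every $A \in \mathbb{S}^{\ell}$. Given $W \in \partial G_M(x)$, I would set $V^{(i)} = N \odot W^{(i)}$, so that $M \odot V^{(i)} = W^{(i)}$, and rewrite the defining inequality for $W$ as $M \odot \big( F(y) - F(x) - \sum_{i = 1}^d (y^{(i)} - x^{(i)}) V^{(i)} \big) \in \mathbb{S}^{\ell}_+$ for all $y$. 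Then I would apply the Schur product theorem a second time, now with the positive semidefinite matrix $N$: because $N \odot (M \odot A) = (N \odot M) \odot A = A$, this yields $F(y) - F(x) - \sum_{i = 1}^d (y^{(i)} - x^{(i)}) V^{(i)} \in \mathbb{S}^{\ell}_+$ for all $y$, i.e. $V \in \partial F(x)$ with $W = M \odot V$. Hence $W \in M \odot \partial F(x)$, which gives the opposite inclusion.

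The argument is conceptually clean, so the only real subtlety is seeing why both extra hypotheses are needed together in the reverse direction, and they play distinct roles. Nonvanishing of the entries of $M$ is what makes $N$ well defined and guarantees that $M \odot N$ is the all-ones matrix (so that the Hadamard multiplication by $N$ undoes that by $M$), whereas positive semidefiniteness of $N$ is precisely what licenses the second application of the Schur product theorem. I expect this bookkeeping to be the main point to state carefully; the rest is the same linearity-and-cone-preservation reasoning already used twice above.
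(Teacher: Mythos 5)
Your proposal is correct and follows essentially the same route as the paper's proof: Schur's product theorem applied to the convexity-defect matrix for convexity, to the subgradient-inequality matrix for the inclusion, and a second application with the entrywise inverse $N$ (using that $M \odot N$ is the all-ones matrix) for the reverse inclusion. Apart from relabeling which of the two subgradients is called $V$ and which $W$, the argument is the one in the paper.
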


\begin{proof}
From the convexity of $F$ it follows that for any $x_1, x_2 \in \mathbb{R}^d$ and $\alpha \in [0, 1]$ one has
\[
  \alpha F(x_1) + (1 - \alpha) F(x_2) - F(\alpha x_1 + (1 - \alpha) x_2) \succeq \mathbb{O}_{\ell \times \ell},
\]
that is, the matrix $\alpha F(x_1) + (1 - \alpha) F(x_2) - F(\alpha x_1 + (1 - \alpha) x_2)$ is positive semidefinite. 
By the Schur product theorem \cite[Theorem~7.5.3]{HornJohnson} the element-wise product of two positive semidefinite
matrices is positive semidefinite. Therefore
\[
  M \odot \Big( \alpha F(x_1) + (1 - \alpha) F(x_2) - F(\alpha x_1 + (1 - \alpha) x_2) \Big) 
  \succeq \mathbb{O}_{\ell \times \ell}
\]
or, equivalently,
\[
  M \odot F(\alpha x_1 + (1 - \alpha) x_2) \preceq \alpha M \odot F(x_1) + (1 - \alpha) M \odot F(x_2)
\]
for any $x_1, x_2 \in \mathbb{R}^d$ and $\alpha \in [0, 1]$, which means that the function $M \odot F$ is convex.

Fix any $x \in \mathbb{R}^d$ and $V \in \partial F(x)$. By the definition of subdifferential
\[
  \mathbb{O}_{\ell \times \ell} \preceq F(y) - F(x) - \sum_{i = 1}^d (y^{(i)} - x^{(i)}) V^{(i)}
  \quad \forall y \in \mathbb{R}^d.
\]
Hence by the Schur product theorem for all $y \in \mathbb{R}^d$ one has
\begin{align*}
  \mathbb{O}_{\ell \times \ell} &\preceq M \odot \Big( F(y) - F(x) - \sum_{i = 1}^d (y^{(i)} - x^{(i)}) V^{(i)} \Big)
  \\
  &= M \odot F(y) - M \odot F(x) - \sum_{i = 1}^d (y^{(i)} - x^{(i)}) M \odot V^{(i)},
\end{align*}
which by the definition of subdifferential means that $(M\odot V^{(1)}, \ldots M \odot V^{(d)}) \in \partial G_M(x)$ 
or, equivalently, inclusion \eqref{eq:HadamardProductSubdiff} holds true.

Suppose finally that all elements of the matrix $M$ are nonzero and the matrix $N$ with $N_{ij} = M_{ij}^{-1}$ is
positive semidefinite. Choose any $V \in \partial G_M(x)$. By definition
\[
  \mathbb{O}_{\ell \times \ell} \preceq M \odot F(y) - M \odot F(x) - \sum_{i = 1}^d (y^{(i)} - x^{(i)}) V^{(i)}
  \quad \forall y \in \mathbb{R}^d.
\]
Define $W = N \odot V$, that is, $W^{(i)} = N \odot V^{(i)}$ for all $i \in \{ 1, \ldots, d \}$. With the use of the 
Schur product theorem and the fact that $N \odot M \odot A = A$ for any $A \in \mathbb{S}^{\ell}$ one obtains that
\begin{align*}
  \mathbb{O}_{\ell \times \ell} 
  &\preceq N \odot \Big( M \odot F(y) - M \odot F(x) - \sum_{i = 1}^d (y^{(i)} - x^{(i)}) V^{(i)} \Big) 
  \\
  &= F(y) - F(x) - \sum_{i = 1}^d (y^{(i)} - x^{(i)}) W^{(i)}
\end{align*}
for any $y \in \mathbb{R}^d$, which by definition means that $W \in \partial F(x)$. It is easily seen that 
$V = M \odot N \odot V = M \odot W$. Thus, for an arbitrary $V \in \partial G_M(x)$ we have found $W \in \partial F(x)$
such that $V = M \odot W$. In other words, the inclusion opposite to \eqref{eq:HadamardProductSubdiff} holds true.
\end{proof}

Next we consider the subdifferential of the sum of convex functions.

\begin{proposition} \label{prp:SumSubdiff}
Let $F_1, F_2 \colon \mathbb{R}^d \to \mathbb{S}^{\ell}$ be convex functions. Then the function $F_1 + F_2$ is convex
and $\partial F_1(\cdot) + \partial F_2(\cdot) \subseteq \partial (F_1 + F_2)(\cdot)$. If, in addition, one of 
the functions $F_1$ or $F_2$ is Fr\'{e}chet differentiable at some point $x \in \mathbb{R}^d$, then 
$\partial F_1(x) + \partial F_2(x) = \partial (F_1 + F_2)(x)$.
\end{proposition}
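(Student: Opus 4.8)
The plan is to establish the three assertions in turn. The convexity of $F_1 + F_2$ follows immediately by adding the two defining matrix inequalities for $F_1$ and $F_2$ and invoking the additivity of the L\"{o}wner order recorded in \eqref{eq:LoewnerOrderProp}. For the inclusion $\partial F_1(x) + \partial F_2(x) \subseteq \partial(F_1 + F_2)(x)$, I would take $V \in \partial F_1(x)$ and $W \in \partial F_2(x)$, write down their defining subgradient inequalities from \eqref{eq:SubdiffDef}, and add them term by term using \eqref{eq:LoewnerOrderProp} to obtain $V + W \in \partial(F_1 + F_2)(x)$.

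The real content is the reverse inclusion under the differentiability hypothesis; assume without loss of generality that $F_1$ is Fr\'{e}chet differentiable at $x$. By Proposition~\ref{prp:SubdiffAtDiffPoint} the set $\partial F_1(x)$ reduces to the singleton $\{F_1'(x)\}$, so it suffices to show that every $U \in \partial(F_1 + F_2)(x)$ satisfies $U - F_1'(x) \in \partial F_2(x)$. The strategy is to scalarise through Lemma~\ref{lem:SubdiffQuadFormCharacterization}: for each fixed $z \in \mathbb{R}^{\ell}$ the functions $(F_1)_z$ and $(F_2)_z$ are finite-valued convex functions on $\mathbb{R}^d$, so the classical Moreau--Rockafellar sum rule \cite{Rockafellar} applies with no constraint qualification and yields $\partial(F_1 + F_2)_z(x) = \partial(F_1)_z(x) + \partial(F_2)_z(x)$. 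Since $F_1$ is Fr\'{e}chet differentiable at $x$, each $(F_1)_z$ is differentiable at $x$ with gradient $\big( \langle z, (F_1'(x))^{(1)} z \rangle, \ldots, \langle z, (F_1'(x))^{(d)} z \rangle \big)$, so $\partial(F_1)_z(x)$ is itself a singleton.

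Now for $U \in \partial(F_1 + F_2)(x)$ Lemma~\ref{lem:SubdiffQuadFormCharacterization} gives $\big( \langle z, U^{(1)} z \rangle, \ldots, \langle z, U^{(d)} z \rangle \big) \in \partial(F_1 + F_2)_z(x)$ for every $z$; subtracting the unique element of $\partial(F_1)_z(x)$ leaves $\big( \langle z, (U^{(i)} - (F_1'(x))^{(i)}) z \rangle \big)_{i=1}^d \in \partial(F_2)_z(x)$, and applying Lemma~\ref{lem:SubdiffQuadFormCharacterization} to $F_2$ then delivers $U - F_1'(x) \in \partial F_2(x)$. I expect the crux, and the reason the sum rule fails in general, to be the \emph{consistency across $z$}: the Moreau--Rockafellar splitting $a_z + b_z$ of the scalar subgradient depends on $z$ and need not arise from a fixed pair of matrices, so the scalar identities cannot in general be reassembled into a matrix identity. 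Differentiability of $F_1$ is exactly what removes this obstacle, since it pins down $a_z = \big( \langle z, (F_1'(x))^{(i)} z \rangle \big)_i$ as the image of the single collection $F_1'(x)$, forcing $b_z$ to be the image of the single collection $U - F_1'(x)$ and thereby making the scalarised conclusion liftable back to $\mathbb{S}^{\ell}$.
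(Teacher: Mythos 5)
Your proof is correct and follows essentially the same route as the paper's: adding the defining inequalities for the forward inclusion, then scalarising via Lemma~\ref{lem:SubdiffQuadFormCharacterization}, applying the scalar Moreau--Rockafellar sum rule to the finite convex functions $(F_1)_z$ and $(F_2)_z$, and using the singleton subdifferential of the differentiable summand to lift the scalar identity back to a matrix one (the paper merely takes $F_2$ rather than $F_1$ as the differentiable function). Your closing remark on the failure of consistency across $z$ in the general case is exactly the right explanation for why the sum rule cannot hold without the differentiability hypothesis.
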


\begin{proof}
The convexity of the function $F = F_1 + F_2$ can be readily verified directly. Let us prove the inclusion for the
subdifferentials. To this end, fix any $x \in \mathbb{R}^d$ and $V_k \in \partial F_k(x)$, $k \in \{ 1, 2 \}$. By
definition
\[
  \langle z, \Big( F_k(y) - F_k(x) - \sum_{i = 1}^d (y^{(i)} - x^{(i)}) V_k^{(i)} \Big) z \rangle \ge 0 
  \quad \forall z \in \mathbb{R}^{\ell} \: \forall y \in \mathbb{R}^d.
\]
Summing up these two inequalities one gets
\[
  \langle z, \Big( F(y) - F(x) - \sum_{i = 1}^d (y^{(i)} - x^{(i)}) (V_1^{(i)} + V_2^{(i)}) \Big) z \rangle \ge 0 
  \quad \forall z \in \mathbb{R}^{\ell} \: \forall y \in \mathbb{R}^d,
\]
which means that $V_1 + V_2 \in \partial F(x)$, that is 
$\partial F_1(x) + \partial F_2(x) \subseteq \partial (F_1 + F_2)(x)$.

Suppose now that, say, the function $F_2$ is Fr\'{e}chet differentiable at $x$ and fix any 
$V \in \partial(F_1 + F_2)(x)$. By Lemma~\ref{lem:SubdiffQuadFormCharacterization} for any $z \in \mathbb{R}^{\ell}$ one
has
\[
  \big( \langle z, V^{(1)} z \rangle, \ldots, \langle z, V^{(d)} z \rangle \big) \in \partial (F_1 + F_2)_z(x)
  = \partial (F_1)_z(x) + \partial (F_2)_z(x).
\]
Hence bearing in mind the fact that
\[
  \partial (F_2)_z(x) = \big( \langle z, F'_2(x)^{(1)} z \rangle, \ldots, \langle z, F'_2(x)^{(d)} z \rangle \big)
\]
for any $z \in \mathbb{R}^{\ell}$ by Proposition~\ref{prp:SubdiffAtDiffPoint} one gets that
\[
  \big( \langle z, (V - F'_2(x))^{(1)} z \rangle, \ldots, \langle z, (V - F'_2(x))^{(d)} z \rangle \big)
  \in \partial (F_1)_z(x)
\]
for any $z \in \mathbb{R}^{\ell}$. Consequently, $V - F'_2(x) \in \partial F_1(x)$ by 
Lemma~\ref{lem:SubdiffQuadFormCharacterization}. Thus, one has 
$\partial (F_1 + F_2)(x) \subseteq \partial F_1(x) + F'_2(x)$, which implies the required result.
\end{proof}

\begin{remark}
Note that unlike the (fuzzy) sum rules for various generalisations of subdifferential in the sense of convex analysis to
the nonsmooth \textit{nonconvex} case (see \cite{Mordukhovich_I,Ioffe2012,Penot_book}), the sum rule from the previous
proposition can be used to compute subgradients of nonsmooth convex matrix-valued functions in practice. Namely, if 
$V_1 \in \partial F_1(x)$ and $V_2 \in \partial F_2(x)$ are computed, then the previous proposition guarantees that 
$V_1 + V_2 \in \partial (F_1 + F_2)(x)$.
\end{remark}

\begin{proposition}
Let $G(y) = F(A y + b)$, $y \in \mathbb{R}^m$, for some matrix $A \in \mathbb{R}^{d \times m}$ and vector 
$b \in \mathbb{R}^d$. Then the function $G$ is convex and for any $y \in \mathbb{R}^s$ one has
\begin{multline*}
  A^* \partial F(Ay + b) :=
  \\
  \Big\{ \Big( \sum_{i = 1}^d A_{i1} V^{(i)}, \ldots, \sum_{i = 1}^d A_{im} V^{(i)} \Big) 
  \in (\mathbb{S}^{\ell})^m \Bigm| 
  (V^{(1)}, \ldots, V^{(d)}) \in \partial F(Ay + b) \Big\} 
  \\
  \subseteq \partial G(y).
\end{multline*}
\end{proposition}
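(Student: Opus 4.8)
The plan is to prove the chain rule by the same quadratic-form strategy that was used throughout the preceding propositions, reducing matrix inequalities to scalar inequalities via \eqref{eq:LownerOrderViaQuadForm}.

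First I would establish convexity of $G$. Fix $y_1, y_2 \in \mathbb{R}^m$ and $\alpha \in [0,1]$, and set $u_k = A y_k + b$. Since $A(\alpha y_1 + (1-\alpha) y_2) + b = \alpha u_1 + (1-\alpha) u_2$ by affinity of the map $y \mapsto Ay + b$, the convexity inequality for $G$ at $y_1, y_2$ is exactly the convexity inequality for $F$ at $u_1, u_2$, which holds by hypothesis. Thus $G$ is convex.

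Next I would verify the subdifferential inclusion. Fix $y \in \mathbb{R}^m$, write $x = Ay + b$, and take any $V = (V^{(1)}, \ldots, V^{(d)}) \in \partial F(x)$. By \eqref{eq:SubdiffDef} one has, for every $w \in \mathbb{R}^m$,
\[
  F(Aw + b) - F(x) \succeq \sum_{i=1}^d \big( (Aw + b)^{(i)} - x^{(i)} \big) V^{(i)}.
\]
The key computation is to rewrite the right-hand side in terms of $w - y$. Since $(Aw+b)^{(i)} - x^{(i)} = (A(w-y))^{(i)} = \sum_{j=1}^m A_{ij}(w^{(j)} - y^{(j)})$, interchanging the order of summation gives
\[
  \sum_{i=1}^d \Big( \sum_{j=1}^m A_{ij}(w^{(j)} - y^{(j)}) \Big) V^{(i)}
  = \sum_{j=1}^m (w^{(j)} - y^{(j)}) \Big( \sum_{i=1}^d A_{ij} V^{(i)} \Big).
\]
Substituting this back and recalling $G(w) = F(Aw+b)$, $G(y) = F(x)$, one obtains $G(w) - G(y) \succeq \sum_{j=1}^m (w^{(j)} - y^{(j)}) \big( \sum_{i=1}^d A_{ij} V^{(i)} \big)$ for all $w \in \mathbb{R}^m$, which is precisely the statement that $\big( \sum_{i=1}^d A_{i1} V^{(i)}, \ldots, \sum_{i=1}^d A_{im} V^{(i)} \big) \in \partial G(y)$. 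This establishes the claimed inclusion.

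The only genuinely nontrivial step is the index bookkeeping in the double sum, and in particular correctly identifying which column of $A$ contracts against $V$ to produce the $j$-th component of the image subgradient; the rest is routine substitution. I do not expect to prove the reverse inclusion, since $A$ need not be invertible (nor even square), so the analogue of the equality cases in the preceding matrix-transform results is unavailable here; the statement correctly asserts only the one-sided inclusion $A^* \partial F(Ay+b) \subseteq \partial G(y)$.
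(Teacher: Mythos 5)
Your proof is correct and follows essentially the same route as the paper: direct verification from the definition of the subdifferential, with the key step being the re-indexing $(Aw+b)^{(i)}-x^{(i)}=\sum_{j}A_{ij}(w^{(j)}-y^{(j)})$ and the interchange of the double sum to identify $W^{(j)}=\sum_{i}A_{ij}V^{(i)}$. The only cosmetic difference is that the paper wraps the inequality in quadratic forms $\langle z,\cdot\, z\rangle$ via \eqref{eq:LownerOrderViaQuadForm}, whereas you manipulate the L\"{o}wner inequality directly using \eqref{eq:LoewnerOrderProp}; the two formulations are equivalent and the algebra is identical.
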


\begin{proof}
The convexity of the function $G$ can be readily verified directly. Let us prove the inclusion for the subdifferential.

Fix any $y \in \mathbb{R}^m$, define $x = A y + b$, and choose any $V \in \partial F(x)$. By the definition of
subdifferential for any $u \in \mathbb{R}^m$ and $z \in \mathbb{R}^{\ell}$ one has
\begin{align*}
  0 &\le \langle z, \Big( F(x(u)) - F(x) - \sum_{i = 1}^d (x(u)^{(i)} - x^{(i)}) V^{(i)} \Big) z \rangle
  \\
  &= \langle z, \Big( G(u) - G(y) 
  - \sum_{i = 1}^d \Big[ \sum_{j = 1}^m A_{ij} (u^{(j)} - y^{(j)}) \Big] V^{(i)} \Big) z \rangle
  \\
  &= \langle z, \Big( G(u) - G(y) - \sum_{j = 1}^m (u^{(j)} - y^{(j)}) W^{(j)} \Big) z \rangle
\end{align*}
where $x(u) = A u + b$ and $W^{(j)} = \sum_{i = 1}^d A_{ij} V^{(i)}$. Consequently, by the definition of subgradient
$W = (W^{(1)}, \ldots, W^{(m)}) \in \partial G(y)$, which completes the proof.
\end{proof}

Let us finally consider the subdifferential of a block matrix function.

\begin{theorem} \label{thrm:BlockMatrixSubdiff}
Let $F_1 \colon \mathbb{R}^d \to \mathbb{S}^{\ell}$ and $F_2 \colon \mathbb{R}^d \to \mathbb{S}^m$ be convex functions.
Then the block matrix function 
$F(\cdot) = \left( \begin{smallmatrix} F_1(\cdot) & \mathbb{O}_{\ell \times m} \\ 
\mathbb{O}_{m \times \ell} & F_2(\cdot) \end{smallmatrix} \right)$
is convex and 
\begin{multline} \label{eq:BlockMatrixSubdiffIncl}
  \begin{pmatrix} 
    \partial F_1(\cdot) & \mathbb{O}_{\ell \times m} 
    \\
    \mathbb{O}_{m \times \ell} & \partial F_2(\cdot)
  \end{pmatrix}
  := \Big\{ V = (V^{(1)}, \ldots, V^{(d)}) \in (\mathbb{S}^{\ell + m})^d \Bigm|
  V^{(i)} = \left( \begin{smallmatrix} 
		     V_1^{(i)} & \mathbb{O}_{\ell \times m} \\ \mathbb{O}_{m \times \ell} & V_2^{(i)}
		   \end{smallmatrix} \right),  
  \\
  i \in \{ 1, \ldots, d \}, \enspace V_1 \in \partial F_1(\cdot), \: V_2 \in \partial F_2(\cdot) \Big\}
  \subseteq \partial F(\cdot).
\end{multline}
Moreover, if $V = (V^{(1)}, \ldots, V^{(d)}) \in \partial F(x)$ with 
$V^{(i)} = \left( \begin{smallmatrix} V^{(i)}_{11} & V^{(i)}_{12} \\ V^{(i)}_{21} & V^{(i)}_{22} \end{smallmatrix}
\right)$ for some $x \in \mathbb{R}^d$ and some matrices $V^{(i)}_{11} \in \mathbb{S}^{\ell}$, 
$V^{(i)}_{22} \in \mathbb{S}^m$, and $V^{(i)}_{12} = (V_{21}^{(i)})^T \in \mathbb{R}^{\ell \times m}$, then 
$V_{11} = (V^{(1)}_{11}, \ldots, V^{(d)}_{11}) \in \partial F_1(x)$ and 
$V_{22} = (V^{(1)}_{22}, \ldots, V^{(d)}_{22}) \in \partial F_2(x)$.
\end{theorem}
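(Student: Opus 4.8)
The plan is to exploit the fact that for block-diagonal symmetric matrices every relevant notion---positive semidefiniteness, the Löwner order, and hence both convexity and the subgradient inequality---decomposes blockwise. The single structural fact underlying all three assertions is the following: a symmetric block matrix $\left( \begin{smallmatrix} A & B \\ B^T & C \end{smallmatrix} \right)$ is positive semidefinite \emph{only if} its diagonal blocks $A$ and $C$ are positive semidefinite (test the quadratic form against vectors of the form $(z_1, 0)$ and $(0, z_2)$), while a \emph{block-diagonal} symmetric matrix is positive semidefinite if and only if both of its diagonal blocks are.

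Convexity of $F$ I would establish exactly in the spirit of Example~\ref{ex:ConvexFunctWithNonconvexElements}: for $z = (z_1, z_2) \in \mathbb{R}^{\ell + m}$ with $z_1 \in \mathbb{R}^{\ell}$ and $z_2 \in \mathbb{R}^m$ one has $F_z(\cdot) = (F_1)_{z_1}(\cdot) + (F_2)_{z_2}(\cdot)$, and since each summand is convex (by the convexity of $F_1$ and $F_2$ together with \eqref{eq:LownerOrderViaQuadForm}), the function $F_z$ is convex for every $z$, whence $F$ is convex.

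For the forward inclusion \eqref{eq:BlockMatrixSubdiffIncl}, fix $V_1 \in \partial F_1(x)$ and $V_2 \in \partial F_2(x)$ and form the block-diagonal operator $V$ with $V^{(i)} = \left( \begin{smallmatrix} V_1^{(i)} & \mathbb{O}_{\ell \times m} \\ \mathbb{O}_{m \times \ell} & V_2^{(i)} \end{smallmatrix} \right)$. The slack matrix $F(y) - F(x) - \sum_{i=1}^d (y^{(i)} - x^{(i)}) V^{(i)}$ is then itself block-diagonal, and its two diagonal blocks $F_k(y) - F_k(x) - \sum_{i=1}^d (y^{(i)} - x^{(i)}) V_k^{(i)}$, $k \in \{1,2\}$, are positive semidefinite by the definition of $\partial F_k(x)$. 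By the blockwise characterisation above, the full slack matrix is positive semidefinite for every $y$, which is precisely the statement $V \in \partial F(x)$.

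The converse is the part requiring genuine use of the structural fact and is the only step with any subtlety. Given an arbitrary $V \in \partial F(x)$---whose off-diagonal blocks $V_{12}^{(i)}$ need not vanish---the slack matrix $S(y) := F(y) - F(x) - \sum_{i=1}^d (y^{(i)} - x^{(i)}) V^{(i)}$ is positive semidefinite for every $y$ by \eqref{eq:SubdiffDef} and \eqref{eq:LownerOrderViaQuadForm}. I would extract the two diagonal assertions by testing $S(y)$ against vectors supported on a single block: for $z = (z_1, 0)$ with $z_1 \in \mathbb{R}^{\ell}$ the off-diagonal blocks $V_{12}^{(i)}$ are annihilated and $\langle z, S(y) z \rangle$ reduces to $\langle z_1, ( F_1(y) - F_1(x) - \sum_{i=1}^d (y^{(i)} - x^{(i)}) V_{11}^{(i)} ) z_1 \rangle \ge 0$. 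Since this holds for all $z_1 \in \mathbb{R}^{\ell}$ and all $y \in \mathbb{R}^d$, characterisation \eqref{eq:LownerOrderViaQuadForm} yields $V_{11} \in \partial F_1(x)$; the symmetric choice $z = (0, z_2)$ gives $V_{22} \in \partial F_2(x)$. The point worth emphasising is that no information about the off-diagonal blocks $V_{12}$ is needed or recovered by this argument---this is exactly the mechanism by which the inclusion in \eqref{eq:BlockMatrixSubdiffIncl} is, in general, strict.
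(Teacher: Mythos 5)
Your proposal is correct and follows essentially the same route as the paper: both proofs reduce everything to the quadratic forms $\langle z, \cdot\, z\rangle$ and obtain the converse by testing with vectors $(z_1, 0)$ and $(0, z_2)$ supported on a single block. The only (cosmetic) difference is that you work directly with the definition via \eqref{eq:LownerOrderViaQuadForm}, whereas the paper routes the same computation through Lemma~\ref{lem:SubdiffQuadFormCharacterization} and the scalar sum rule $\partial\bigl((F_1)_{z_1} + (F_2)_{z_2}\bigr)(x) = \partial (F_1)_{z_1}(x) + \partial (F_2)_{z_2}(x)$, an invocation that becomes unnecessary once $z_2 = 0$ is substituted --- so your version is, if anything, slightly more self-contained.
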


\begin{proof}
The convexity of the function $F$ can be easily verified directly. Let us prove the inclusion for the subdifferentials.

Fix any $x \in \mathbb{R}^d$ and $V_k \in \partial F_k(x)$. By definition for any $z_1 \in \mathbb{R}^{\ell}$ and 
$z_2 \in \mathbb{R}^m$ one has
\[
  \langle z_k, \Big( F_k(y) - F_k(x) + \sum_{i = 1}^d (y^{(i)} - x^{(i)}) V_k^{(i)} \Big) z_k \rangle \ge 0 
  \quad \forall y \in \mathbb{R}^d, \: k \in \{ 1, 2 \}.
\]
Summing up these two inequalities one obtains for any $z \in \mathbb{R}^{\ell + m}$ and $y \in \mathbb{R}^d$
\[
  \left\langle z, \left( F(y) - F(x) - \sum_{i = 1}^d (y^{(i)} - x^{(i)})
  \left( \begin{smallmatrix} 
  V_1^{(i)} & \mathbb{O}_{\ell \times m} \\ \mathbb{O}_{m \times \ell} & V_2^{(i)} 
  \end{smallmatrix} \right) 
   \right) z \right\rangle \ge 0,
\]
which implies that inclusion \eqref{eq:BlockMatrixSubdiffIncl} holds true.

Suppose now that $V = (V^{(1)}, \ldots, V^{(d)}) \in \partial F(x)$ with 
$V^{(i)} = \left( \begin{smallmatrix} V^{(i)}_{11} & V^{(i)}_{12} \\ V^{(i)}_{21} & V^{(i)}_{22} \end{smallmatrix}
\right)$ for some matrices $V^{(i)}_{11} \in \mathbb{S}^{\ell}$, $V^{(i)}_{22} \in \mathbb{S}^m$, and 
$V^{(i)}_{12} = (V_{21}^{(i)})^T \in \mathbb{R}^{\ell \times m}$. Then by
Lemma~\ref{lem:SubdiffQuadFormCharacterization} for any $z = (z_1, z_2) \in \mathbb{R}^{\ell} \times \mathbb{R}^m$ one
has
\begin{multline} \label{eq:BlockMatrixSubdiffScalarized}
  \Big( \langle z, V^{(1)} z \rangle, \ldots, \langle z, V^{(d)} z \rangle \Big)
  \\
  = \Big( \langle z_1, V_{11}^{(1)} z_1 \rangle + \langle z_1, V_{12}^{(1)} z_2 \rangle 
  + \langle z_2, V_{21}^{(1)} z_1 \rangle + \langle z_2, V_{22}^{(1)} z_2 \rangle, \ldots,
  \\
  \langle z_1, V_{11}^{(d)} z_1 \rangle + \langle z_1, V_{12}^{(d)} z_2 \rangle 
  + \langle z_2, V_{21}^{(d)} z_1 \rangle + \langle z_2, V_{22}^{(d)} z_2 \rangle \Big)
  \\
  \in \partial F_z(x) = \partial \Big( (F_1)_{z_1} + (F_2)_{z_2} \Big)(x)
  = \partial (F_1)_{z_1}(x) + \partial (F_2)_{z_2}(x).
\end{multline}
Putting $z_2 = 0$ one gets that 
\[
  \Big( \langle z_1, V_{11}^{(1)} z_1 \rangle, \ldots, \langle z_1, V_{11}^{(d)} z_1 \rangle \Big) 
  \in \partial (F_1)_{z_1}(x) \quad \forall z_1 \in \mathbb{R}^{\ell},
\]
which by Lemma~\ref{lem:SubdiffQuadFormCharacterization} implies that 
$V_{11} \in \partial F_1(x)$. In turn, putting $z_1 = 0$ and applying Lemma~\ref{lem:SubdiffQuadFormCharacterization}
once again one gets that $V_{22} \in \partial F_2(x)$.
\end{proof}

\begin{corollary} \label{crlr:DiagMatrixSubdiff}
Let $F(\cdot) = \diag(f_1(\cdot), \ldots, f_{\ell}(\cdot))$ for some real-valued convex functions 
$f_i \colon \mathbb{R}^d \to \mathbb{R}$. Then $F$ is convex and
\begin{multline*}
  \Big\{ (V^{(1)}, \ldots, V^{(d)}) \in (\mathbb{S}^{\ell})^d \Bigm| 
  V^{(i)} = \diag(v_1^{(i)}, \ldots, v_{\ell}^{(i)}), 
  \\
  v_j \in \partial f_j(\cdot), \: j \in \{ 1, \ldots, d \} \Big\}
  \subseteq \partial F(\cdot).
\end{multline*}
\end{corollary}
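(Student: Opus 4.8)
The plan is to obtain the corollary from Theorem~\ref{thrm:BlockMatrixSubdiff} by induction on the matrix size $\ell$, after first recording the convexity of $F$ and identifying the matrix subdifferential of a scalar convex function with its ordinary subdifferential. The convexity of $F$ is immediate: since $F$ is diagonal, for every $z \in \mathbb{R}^\ell$ one has $F_z(\cdot) = \langle z, F(\cdot) z \rangle = \sum_{j = 1}^\ell (z^{(j)})^2 f_j(\cdot)$, a nonnegative combination of convex functions and hence convex; by the characterisation recorded in Section~\ref{sect:Prelim} (convexity of $F$ is equivalent to convexity of all the functions $F_z$), the function $F$ is convex.

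The base case $\ell = 1$ rests on the observation that for a real-valued convex function $f \colon \mathbb{R}^d \to \mathbb{R}$, viewed as a map into $\mathbb{S}^1 = \mathbb{R}$, the subdifferential in the sense of \eqref{eq:SubdiffDef} coincides with the ordinary convex-analytic subdifferential of $f$. Indeed, for $\ell = 1$ the L\"{o}wner order reduces to the usual order on $\mathbb{R}$, so the defining inequality $f(y) - f(x) \succeq \sum_{i = 1}^d (y^{(i)} - x^{(i)}) V^{(i)}$ becomes exactly the classical subgradient inequality. This settles the case $\ell = 1$, where the asserted inclusion is in fact an equality.

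For the inductive step I would write $F = \left( \begin{smallmatrix} f_1 & \mathbb{O}_{1 \times (\ell - 1)} \\ \mathbb{O}_{(\ell - 1) \times 1} & G \end{smallmatrix} \right)$ with $G = \diag(f_2, \ldots, f_\ell) \colon \mathbb{R}^d \to \mathbb{S}^{\ell - 1}$, and take an arbitrary element $V$ of the left-hand set, so $V^{(i)} = \diag(v_1^{(i)}, \ldots, v_\ell^{(i)})$ with $v_j = (v_j^{(1)}, \ldots, v_j^{(d)}) \in \partial f_j(x)$ for each $j$. Setting $V_1 = v_1 \in \partial f_1(x)$ and letting $V_2$ be the collection of diagonal matrices $V_2^{(i)} = \diag(v_2^{(i)}, \ldots, v_\ell^{(i)})$, the induction hypothesis applied to $G$ yields $V_2 \in \partial G(x)$; the inclusion in Theorem~\ref{thrm:BlockMatrixSubdiff} then gives $V \in \partial F(x)$, completing the induction.

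There is no genuinely hard step here: the only points deserving care are the identification of the scalar matrix-subdifferential with the classical one (which anchors the induction) and the bookkeeping of the block decomposition. I would also remark that the conclusion can be reached directly, without induction, by observing that $F(y) - F(x) - \sum_{i = 1}^d (y^{(i)} - x^{(i)}) V^{(i)}$ is the diagonal matrix whose $j$-th entry equals $f_j(y) - f_j(x) - \sum_{i = 1}^d (y^{(i)} - x^{(i)}) v_j^{(i)} \ge 0$ by the subgradient inequality for $f_j$, and that a diagonal matrix with nonnegative diagonal entries is positive semidefinite.
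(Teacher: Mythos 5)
Your proof is correct and follows the route the paper intends: the corollary is stated there without proof as an immediate consequence of Theorem~\ref{thrm:BlockMatrixSubdiff}, which is exactly the block-decomposition induction you carry out (with the $\ell=1$ identification of the matrix subdifferential with the classical one anchoring the recursion). Your closing remark is worth keeping, since it gives an even simpler self-contained argument: the matrix $F(y)-F(x)-\sum_{i=1}^d (y^{(i)}-x^{(i)})V^{(i)}$ is diagonal with $j$-th entry $f_j(y)-f_j(x)-\sum_{i=1}^d (y^{(i)}-x^{(i)})v_j^{(i)}\ge 0$ by the scalar subgradient inequality, and a diagonal matrix with nonnegative entries is positive semidefinite.
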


\begin{corollary}
If under the assumptions of Theorem~\ref{thrm:BlockMatrixSubdiff} one of the functions $F_1$ or $F_2$ is Fr\'{e}chet
differentiable at some $x \in \mathbb{R}^d$, then
\[
  \partial F(x) = 
  \begin{pmatrix} 
    \partial F_1(x) & \mathbb{O}_{\ell \times m} 
    \\
    \mathbb{O}_{m \times \ell} & \partial F_2(x)
  \end{pmatrix}.
\]
\end{corollary}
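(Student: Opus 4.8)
The plan is to combine the two halves of Theorem~\ref{thrm:BlockMatrixSubdiff} with the differentiability hypothesis so as to pin down the off-diagonal blocks of an arbitrary subgradient. The inclusion $\left(\begin{smallmatrix} \partial F_1(x) & \mathbb{O}_{\ell\times m} \\ \mathbb{O}_{m\times\ell} & \partial F_2(x)\end{smallmatrix}\right) \subseteq \partial F(x)$ is already furnished by Theorem~\ref{thrm:BlockMatrixSubdiff}, so only the reverse inclusion requires work. Assuming (without loss of generality, by symmetry) that $F_2$ is Fr\'echet differentiable at $x$, Proposition~\ref{prp:SubdiffAtDiffPoint} gives $\partial F_2(x) = \{ F_2'(x) \}$. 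I would then fix any $V \in \partial F(x)$, write $V^{(i)} = \left(\begin{smallmatrix} V_{11}^{(i)} & V_{12}^{(i)} \\ V_{21}^{(i)} & V_{22}^{(i)}\end{smallmatrix}\right)$, and invoke the second part of Theorem~\ref{thrm:BlockMatrixSubdiff} to obtain $V_{11} \in \partial F_1(x)$ and $V_{22} \in \partial F_2(x) = \{ F_2'(x) \}$, i.e. $V_{22} = F_2'(x)$. The entire remaining content of the proof is to show that the off-diagonal blocks vanish: $V_{12}^{(i)} = (V_{21}^{(i)})^T = \mathbb{O}_{\ell\times m}$ for every $i$.

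To extract this, I would return to the direct definition of subgradient. Writing $h = y - x$ and introducing the blocks $A(h) = F_1(x+h) - F_1(x) - \sum_i h^{(i)} V_{11}^{(i)}$, $B(h) = -\sum_i h^{(i)} V_{12}^{(i)}$, and $C(h) = F_2(x+h) - F_2(x) - \sum_i h^{(i)} V_{22}^{(i)}$, the membership $V \in \partial F(x)$ is precisely the statement that $\left(\begin{smallmatrix} A(h) & B(h) \\ B(h)^T & C(h)\end{smallmatrix}\right) \succeq \mathbb{O}$ for all $h$. In particular the diagonal blocks are positive semidefinite, so $\langle u, A(h) u\rangle \ge 0$ and $\langle v, C(h) v\rangle \ge 0$. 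Expressing the nonnegativity of the associated quadratic form on a vector $(u,v) \in \mathbb{R}^{\ell}\times\mathbb{R}^m$ and optimising over the rescaling $u \mapsto s u$, $s \in \mathbb{R}$, a standard discriminant argument yields the Cauchy--Schwarz-type estimate $\langle u, B(h) v\rangle^2 \le \langle u, A(h) u\rangle\,\langle v, C(h) v\rangle$ for all $u \in \mathbb{R}^{\ell}$ and $v \in \mathbb{R}^m$.

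The decisive step is to feed in the differentiability asymmetry along coordinate directions $h = t e_k$. Here $B(t e_k) = -t V_{12}^{(k)}$, the local Lipschitz continuity of $F_1$ (Theorem~\ref{thrm:ConvexImpliesDC_Lipschitz}) gives $0 \le \langle u, A(t e_k) u\rangle = O(|t|)$, and the Fr\'echet differentiability of $F_2$ together with $V_{22} = F_2'(x)$ gives $0 \le \langle v, C(t e_k) v\rangle = o(|t|)$. Substituting into the estimate produces $t^2 \langle u, V_{12}^{(k)} v\rangle^2 \le O(|t|)\cdot o(|t|) = o(t^2)$; dividing by $t^2$ and letting $t \to 0$ forces $\langle u, V_{12}^{(k)} v\rangle = 0$ for all $u, v$, hence $V_{12}^{(k)} = \mathbb{O}_{\ell\times m}$ and $V_{21}^{(k)} = \mathbb{O}_{m\times\ell}$ for every $k$. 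This places $V$ in the block-diagonal set and completes the reverse inclusion, which combined with the forward inclusion gives the claimed equality. The main obstacle I anticipate is the careful bookkeeping of asymptotic orders: the argument turns entirely on the fact that the Fr\'echet-differentiable block contributes only $o(|t|)$ while the merely Lipschitz block contributes $O(|t|)$, so that their product is $o(t^2)$ and strictly dominates the $t^2$ coming from $B$. No stronger estimate on the $A$-block is available (since $F_1$ may be genuinely nonsmooth), and the whole delicacy lies in exploiting this one-sided gain.
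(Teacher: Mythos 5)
Your proof is correct, and it reaches the conclusion by a genuinely different mechanism than the paper. The shared skeleton is the same: the forward inclusion and the memberships $V_{11} \in \partial F_1(x)$, $V_{22} \in \partial F_2(x) = \{ F_2'(x) \}$ come from Theorem~\ref{thrm:BlockMatrixSubdiff} and Proposition~\ref{prp:SubdiffAtDiffPoint}, and everything reduces to annihilating the off-diagonal blocks. The paper does this in the \emph{dual} (scalarized) picture: using Lemma~\ref{lem:SubdiffQuadFormCharacterization}, the exact sum rule for the scalar convex functions $(F_1)_{z_1}$, $(F_2)_{z_2}$, and the singleton $\partial (F_2)_{z_2}(x)$, it shows that the vector with components $\langle z_1, V_{11}^{(i)} z_1 \rangle + 2 \langle z_2, V_{21}^{(i)} z_1 \rangle$ lies in $\partial (F_1)_{z_1}(x)$ for \emph{every} $z_2$; if some $V_{21}^{(i)} z_1 \ne 0$, the choice $z_2 = t V_{21}^{(i)} z_1$ with $t \to \infty$ makes this vector unbounded, contradicting the boundedness of the scalar subdifferential. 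So the paper scales the dual vector to infinity and needs no asymptotics in $y$. You instead work with the \emph{primal} increment $h = t e_k$, $t \to 0$: the block-PSD Cauchy--Schwarz (discriminant) inequality $\langle u, B(h) v \rangle^2 \le \langle u, A(h) u \rangle \, \langle v, C(h) v \rangle$ together with the rate count $O(|t|) \cdot o(|t|) = o(t^2)$ forces $V_{12}^{(k)} = \mathbb{O}_{\ell \times m}$; this is sound, including the degenerate case $\langle u, A(h) u \rangle = 0$ of the discriminant argument. Each route isolates the role of Fr\'{e}chet (rather than G\^{a}teaux) differentiability differently: the paper through the singleton $\partial (F_2)_{z_2}(x)$ feeding the exact scalar sum rule, you through the uniform $o(\| h \|)$ remainder. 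Your version is more quantitative and bypasses the Moreau--Rockafellar equality for sums of scalar convex functions, needing only positive semidefiniteness of the block matrix and the local Lipschitz continuity supplied by Theorem~\ref{thrm:ConvexImpliesDC_Lipschitz}; the paper's version avoids all asymptotic bookkeeping and reuses the scalarization machinery already in place from the proof of Theorem~\ref{thrm:BlockMatrixSubdiff}.
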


\begin{proof}
Suppose that, say, the function $F_2$ is Fr\'{e}chet differentiable at some point $x \in \mathbb{R}^d$. Let
$V = (V^{(1)}, \ldots, V^{(d)}) \in \partial F(x)$ with 
$V^{(i)} = \left( \begin{smallmatrix} V^{(i)}_{11} & V^{(i)}_{12} \\ 
V^{(i)}_{21} & V^{(i)}_{22} \end{smallmatrix} \right)$ 
for some matrices $V^{(i)}_{11} \in \mathbb{S}^{\ell}$, $V^{(i)}_{22} \in \mathbb{S}^m$, and 
$V^{(i)}_{12} = (V_{21}^{(i)})^T \in \mathbb{R}^{\ell \times m}$. Let us show that 
$V_{21}^{(i)} = \mathbb{O}_{m \times \ell}$ for all $i \in \{ 1, \ldots, d \}$. Then taking into account 
Theorem~\ref{thrm:BlockMatrixSubdiff} one obtains the required result.

Indeed, by Lemma~\ref{lem:SubdiffQuadFormCharacterization} for any $z = (z_1, z_2) \in \mathbb{R}^{\ell} \times
\mathbb{R}^m$ relations \eqref{eq:BlockMatrixSubdiffScalarized} hold true. Consequently, applying
Theorem~\ref{thrm:BlockMatrixSubdiff} and Proposition~\ref{prp:SubdiffAtDiffPoint} one obtains that 
\begin{multline*}
  Q(z_1, z_2) := \Big( \langle z_1, V_{11}^{(1)} z_1 \rangle + 2 \langle z_2, V_{21}^{(1)} z_1 \rangle, \ldots,
  \langle z_1, V_{11}^{(d)} z_1 \rangle + 2 \langle z_2, V_{21}^{(d)} z_1 \rangle \Big)
  \\
  \in \partial (F_1)_{z_1}(x)
\end{multline*}
for any $z = (z_1, z_2) \in \mathbb{R}^{\ell} \times \mathbb{R}^m$. 

Suppose by contradiction that $V_{21}^{(i)} \ne \mathbb{O}_{m \times \ell}$ for some $i \in \{ 1, \ldots, d \}$. 
Then there exists $z_1 \in \mathbb{R}^{\ell}$ such that $V_{21}^{(i)} z_1 \ne 0$. For the vector 
$z_2 = t V_{21}^{(i)} z_1$ one
has $Q(z_1, t V_{21}^{(i)} z_1) \in \partial (F_1)_{z_1}(x)$ for any $t \in \mathbb{R}$. However, note that
$\| Q(z_1, t V_{21}^{(i)} z_1) \| \to + \infty$ as $t \to \infty$, since $V_{21}^{(i)} z_1 \ne 0$ and
\[
  Q(z_1, t V_{21}^{(i)} z_1)^{(i)} = \langle z_1, V_{11}^{(i)} z_1 \rangle + 2 t \| V_{21}^{(i)} z_1 \|^2,
\]
which contradicts the fact that the subdifferential $\partial (F_1)_{z_1}(x)$ is a bounded set. Thus,
$V_{21}^{(i)} = \mathbb{O}_{m \times \ell}$ for all $i \in \{ 1, \ldots, d \}$.
\end{proof}

It should be noted that the inclusions for subdifferentials from Theorem~\ref{thrm:BlockMatrixSubdiff} and
Corollary~\ref{crlr:DiagMatrixSubdiff} need not be satisfied as equalities. In particular, a subgradient of a convex
diagonal matrix-valued function need not be a diagonal matrix.

\begin{example}
Let $d = 1$, $\ell = 2$, and 
$F(x) = \left( \begin{smallmatrix} \max\{ 0, 2x \} & 0 \\ 0 & \max\{ 0, 2x \} \end{smallmatrix} \right)$ 
for any $x \in \mathbb{R}$. Let us show that 
$V_0 = \left( \begin{smallmatrix} 1 & 1 \\ 1 & 1 \end{smallmatrix} \right) \in \partial F(0)$. 
To this end we will utilise Theorem~\ref{thrm:OneDimCase}. Indeed, one has
\[
  F'_+(0) = \begin{pmatrix} 2 & 0 \\ 0 & 2 \end{pmatrix}, \quad
  F'_-(0) = \mathbb{O}_{2 \times 2}.
\]
It is easily seen that the eigenvalues of the matrices
\[
  F'_+(0) - V_0 = \begin{pmatrix} 1 & -1 \\ -1 & 1 \end{pmatrix}, \quad
  V_0 - F'_-(0) = \begin{pmatrix} 1 & 1 \\ 1 & 1 \end{pmatrix}
\]
are $0$ and $2$. Therefore, these matrices are positive semidefinite or, equivalently, 
$F'_+(0) \succeq V_0 \succeq F'_-(0)$, which by Theorem~\ref{thrm:OneDimCase} implies that $V_0 \in \partial F(0)$. 
Note that the matrix $V_0$ is not diagonal, despite the fact that the function $F$ is diagonal matrix-valued.

Observe also that
\[
  \partial_{Cl} F(0) 
  = \left\{ \begin{pmatrix} t & 0 \\ 0 & t \end{pmatrix} \in \mathbb{S}^2 \Biggm| t \in [0, 2] \right\}
\]
and, therefore, $\partial_{Cl} F(0) \ne \partial F(0)$, that is, the inclusion from
Theorem~\ref{thrm:ConvAnalSubdiff_vs_ClarkSubdiff}
is strict in the general case.

In addition, note that
\[
  F = F_1 + F_2, \quad 
  F_1(x) = \left( \begin{smallmatrix} \max\{ 0, 2x \} & 0 \\ 0 & 0 \end{smallmatrix} \right), \quad
  F_2(x) = \left( \begin{smallmatrix} 0 & 0 \\ 0 & \max\{ 0, 2x \} \end{smallmatrix} \right).
\]
Let us show that $V_0 \notin \partial F_1(0) + \partial F_2(0)$, that is, 
$\partial F(0) \ne \partial F_1(0) + \partial F_2(0)$, which means that the inclusion from
Proposition~\ref{prp:SumSubdiff} is also strict in the general case.

With the use of Theorem~\ref{thrm:OneDimCase} one gets
\begin{equation}
\begin{split}
  &\partial F_1(0) = \Big\{ V \in \mathbb{S}^2 \Bigm| (F_1)'_-(0) \preceq V \preceq (F_1)'_+(0) \Big\},
  \\ 
  &(F_1)'_-(0) = \mathbb{O}_{2 \times 2}, \quad 
  (F_1)'_+(0) = \left( \begin{smallmatrix} 2 & 0 \\ 0 & 0 \end{smallmatrix} \right).
\end{split}
\end{equation}
Let us show that for any $V = \{ V_{ij} \}_{i, j = 1}^2 \in \partial F_1(0)$ one has $V_{12} = V_{22} = 0$. Indeed,
applying inequality $(F_1)'_-(0) \preceq V$ and relations \eqref{eq:LownerOrderViaQuadForm} with $z = (0, 1)$ one gets
$0 \le \langle z, (V - (F_1)'_-(0)) z \rangle = V_{22}$, while inequality $V \preceq (F_1)'_+(0)$ implies that
$0 \le \langle z, ((F_1)'_+(0) - V) z \rangle = -V_{22}$. Thus, $V_{22} = 0$. Note also that the inequality 
$V \succeq (F_1)'_-(0)$ means that the matrix $V$ is positive semidefinite. Therefore, its determinant 
$\determ(V) = - V_{12}^2$ is nonnegative, which implies that $V_{12} = 0$.

Arguing in the same way one can show that for any $V \in \partial F_2(0)$ one has $V_{11} = V_{12} = 0$. Therefore,
matrices from the set $\partial F_1(0) + \partial F_2(0)$ are diagonal, which implies that the non-diagonal matrix 
$V_0 \in \partial F(0)$ does not belong to this set.
\end{example}

Theorem~\ref{thrm:BlockMatrixSubdiff} admits extensions to various classes of convex block matrix functions,
whose blocks are constructed from convex matrix-valued functions. Let us present one such extension.

\begin{theorem}
The block matrix function 
$G(\cdot) = \left( \begin{smallmatrix} F(\cdot) & - F(\cdot) \\ - F(\cdot) & F(\cdot) \end{smallmatrix} \right)$
is convex and 
\begin{multline} \label{eq:BlockFromOneMatrixSubdiffIncl}
  \Big\{ W = (W^{(1)}, \ldots, W^{(d)}) \in (\mathbb{S}^{2\ell})^d \Bigm|
  W^{(i)} = \left( \begin{smallmatrix} 
		     V^{(i)} & -V^{(i)} \\ -V^{(i)} & V^{(i)}
		   \end{smallmatrix} \right), 
  \\
  i \in \{ 1, \ldots, d \}, \enspace V \in \partial F(\cdot) \Big\} \subseteq \partial G(\cdot).
\end{multline}
Moreover, if $V = (V^{(1)}, \ldots, V^{(d)}) \in \partial G(x)$ with 
$V^{(i)} = \left( \begin{smallmatrix} V^{(i)}_{11} & V^{(i)}_{12} \\ V^{(i)}_{21} & V^{(i)}_{22} \end{smallmatrix}
\right)$ for some $x \in \mathbb{R}^d$ and some matrices $V^{(i)}_{11}, V^{(i)}_{22} \in \mathbb{S}^{\ell}$ and 
$V^{(i)}_{12} = (V_{21}^{(i)})^T \in \mathbb{R}^{\ell \times \ell}$, then 
$V_{11} = (V^{(1)}_{11}, \ldots, V^{(d)}_{11}) \in \partial F(x)$ and 
$V_{22} = (V^{(1)}_{22}, \ldots, V^{(d)}_{22}) \in \partial F(x)$.
\end{theorem}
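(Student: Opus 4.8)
The plan is to reduce everything to the scalar quadratic forms $G_h(\cdot) = \langle h, G(\cdot) h \rangle$ and apply Lemma~\ref{lem:SubdiffQuadFormCharacterization}. The crucial observation, which drives the entire argument, is that for any $h = (h_1, h_2) \in \mathbb{R}^{\ell} \times \mathbb{R}^{\ell}$ a direct expansion of the block form of $G$ gives $G_h(\cdot) = \langle h_1, F(\cdot) h_1 \rangle - 2 \langle h_1, F(\cdot) h_2 \rangle + \langle h_2, F(\cdot) h_2 \rangle = \langle h_1 - h_2, F(\cdot)(h_1 - h_2) \rangle = F_{h_1 - h_2}(\cdot)$, so that $G_h$ coincides with the scalar quadratic form $F_z$ of $F$ evaluated at $z = h_1 - h_2$. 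Since $F$ is convex, each $F_z$ is convex, hence each $G_h$ is convex, and by the quadratic-form characterisation of matrix convexity recorded after \eqref{eq:LownerOrderViaQuadForm} the function $G$ is convex.

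For the forward inclusion \eqref{eq:BlockFromOneMatrixSubdiffIncl} I would argue directly. Fix $x$ and $V \in \partial F(x)$, and set $D(y) = F(y) - F(x) - \sum_{i=1}^d (y^{(i)} - x^{(i)}) V^{(i)}$, which is positive semidefinite for every $y$ by the definition of $\partial F(x)$. A block computation shows that
\[
  G(y) - G(x) - \sum_{i=1}^d (y^{(i)} - x^{(i)}) W^{(i)} = \begin{pmatrix} D(y) & -D(y) \\ -D(y) & D(y) \end{pmatrix},
\]
and for any $h = (h_1, h_2)$ the quadratic form of the right-hand side equals $\langle h_1 - h_2, D(y)(h_1 - h_2) \rangle \ge 0$. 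Hence the block matrix is positive semidefinite for every $y$, which is exactly the statement $W \in \partial G(x)$.

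The reverse direction is handled through Lemma~\ref{lem:SubdiffQuadFormCharacterization} combined with the identity $G_h = F_{h_1 - h_2}$. Given $W \in \partial G(x)$, the lemma yields $(\langle h, W^{(1)} h \rangle, \ldots, \langle h, W^{(d)} h \rangle) \in \partial G_h(x) = \partial F_{h_1 - h_2}(x)$ for every $h = (h_1, h_2)$. To recover $V_{11}$ I would set $h_2 = 0$ and $h_1 = z$, so that $\langle h, W^{(i)} h \rangle = \langle z, V_{11}^{(i)} z \rangle$ and $h_1 - h_2 = z$; this gives $(\langle z, V_{11}^{(1)} z \rangle, \ldots, \langle z, V_{11}^{(d)} z \rangle) \in \partial F_z(x)$ for all $z \in \mathbb{R}^{\ell}$, and Lemma~\ref{lem:SubdiffQuadFormCharacterization} then yields $V_{11} \in \partial F(x)$. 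For $V_{22}$ I would set $h_1 = 0$ and $h_2 = z$; here $h_1 - h_2 = -z$, so the membership lands in $\partial F_{-z}(x)$, but since $F_{-z}(\cdot) = \langle z, F(\cdot) z \rangle = F_z(\cdot)$ we again obtain $(\langle z, V_{22}^{(1)} z \rangle, \ldots, \langle z, V_{22}^{(d)} z \rangle) \in \partial F_z(x)$ for all $z$, whence $V_{22} \in \partial F(x)$.

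There is no serious obstacle: once the reduction $G_h = F_{h_1 - h_2}$ is in place, the proof is a transcription of the scheme used for Theorem~\ref{thrm:BlockMatrixSubdiff}, the only mild point being the sign bookkeeping that produces $F_{-z}$ in the $V_{22}$ computation together with the observation that $F_{-z} = F_z$. I would expect the off-diagonal blocks $V_{12}, V_{21}$ to remain unconstrained by the statement, mirroring the phenomenon already seen in the preceding example where subgradients of a block function need not respect the block structure.
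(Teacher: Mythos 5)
Your proposal is correct and follows essentially the same route as the paper: the key identity $G_z = F_{z_1 - z_2}$ for convexity, the same block computation (the paper writes it via quadratic forms rather than naming the matrix $D(y)$) for the inclusion \eqref{eq:BlockFromOneMatrixSubdiffIncl}, and the substitutions $z_2 = 0$, $z_1 = 0$ for the converse part, with your detour through Lemma~\ref{lem:SubdiffQuadFormCharacterization} (and the observation $F_{-z} = F_z$) being just a packaged form of the paper's direct manipulation of the subgradient inequality.
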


\begin{proof}
For any $z = (z_1, z_2) \in \mathbb{R}^{\ell} \times \mathbb{R}^{\ell}$ one has
\begin{align*}
  G_z(\cdot) = \langle z, G(\cdot) z \rangle 
  &= \langle z_1, F(\cdot) z_1) \rangle - 2 \langle z_1, F(\cdot) z_2 \rangle + \langle z_2, F(\cdot) z_2 \rangle
  \\
  &= \langle (z_1 - z_2), F(\cdot) (z_1 - z_2) \rangle = F_{z_1 - z_2}(\cdot).
\end{align*}
Since the matrix-valued function $F$ is convex, the function $F_{z_1 - z_2}(\cdot)$ is convex for any 
$z_1, z_2 \in \mathbb{R}^d$. Therefore, the function $G_z(\cdot)$ is convex for any $z \in \mathbb{R}^{2 \ell}$, 
which implies that the block matrix function $G$ is also convex.

Choose any $x \in \mathbb{R}^d$ and $V \in \partial F(x)$. By the definition of subdifferential and relations 
\eqref{eq:LownerOrderViaQuadForm} one has
\begin{align*}
  0 &\le \Big\langle (z_1 - z_2), \Big( F(y) - F(x) 
  - \sum_{i = 1}^d (y^{(i)} - x^{(i)}) V^{(i)} \Big) (z_1 - z_2) \Big\rangle
  \\
  &= \left\langle z, \left( G(y) - G(x) - \sum_{i = 1}^d (y^{(i)} - x^{(i)}) 
  \left( \begin{smallmatrix} V^{(i)} & - V^{(i)} 
  \\ - V^{(i)} & V^{(i)} \end{smallmatrix} \right) \right) z \right\rangle
\end{align*}
for any $y \in \mathbb{R}^d$ and $z = (z_1, z_2) \in \mathbb{R}^{\ell} \times \mathbb{R}^{\ell}$. Consequenlty,
inclusion \eqref{eq:BlockFromOneMatrixSubdiffIncl} is valid.

Let now $V \in \partial G(x)$ be from the formulation of the second statement of the theorem. Then by the definition
of subdifferential one has
\[
  \Big\langle z, \Big( G(y) - G(x) - \sum_{i = 1}^d (y^{(i)} - x^{(i)}) V^{(i)} \Big) z \Big\rangle \ge 0
\]
for all $y \in \mathbb{R}^d$ and $z = (z_1, z_2) \in \mathbb{R}^{\ell} \times \mathbb{R}^{\ell}$. Putting $z_2 = 0$
one gets
\[
  \Big\langle z_1, \Big( F(y) - F(x) - \sum_{i = 1}^d (y^{(i)} - x^{(i)}) V_{11}^{(i)} \Big) z_1 \Big\rangle \ge 0
\]
for all $y \in \mathbb{R}^d$ and $z_1 \in \mathbb{R}^{\ell}$. Therefore $V_{11} \in \partial F(x)$. The inclusion
$V_{22} \in \partial F(x)$ is proved in the same way.
\end{proof}

Let us finally return to Example~\ref{ex:ConvexFunctWithNonconvexElements}, from which we started our discussion of
subdifferentials of convex matrix-valued functions.

\begin{example}
Let $d = \ell = 2$ and 
\[
  F(x) = \begin{pmatrix} 
	  |x^{(1)}| + |x^{(2)}| & |x^{(1)}| - |x^{(2)}| 
	  \\ 
	  |x^{(1)}| - |x^{(2)}| & |x^{(1)}| + |x^{(2)}|  
	 \end{pmatrix}
  \quad \forall x = (x^{(1)}, x^{(2)}) \in \mathbb{R}^2
\]
Let us compute a subset of the subdifferential of this convex matrix-valued mapping at the origin. A subset of the
subdifferential of $F$ at any other point can be computed in precisely the same way.

For the sake of convenience, introduce the functions
\[
  F_1(x) = \begin{pmatrix} 
	    |x^{(1)}|& |x^{(1)}| 
	    \\ 
	    |x^{(1)}| & |x^{(1)}|
  \end{pmatrix}
  \quad
  F_2(x) = \begin{pmatrix} 
	    |x^{(2)}| & - |x^{(2)}| 
	    \\ 
	    - |x^{(2)}| & |x^{(2)}|  
  \end{pmatrix} 
  \quad \forall x = (x^{(1)}, x^{(2)}) \in \mathbb{R}^2  
\]
It is easily seen that both $F_1$ and $F_2$ are convex. Since $F = F_1 + F_2$, by Proposition~\ref{prp:SumSubdiff} and
Theorem~\ref{thrm:ConvAnalSubdiff_vs_ClarkSubdiff} one has 
$\partial_{Cl} F_1(0) + \partial_{Cl} F_2(0) \subseteq \partial F(0)$. One can readily check that
\[
  \partial_{Cl} F_1(0) + \partial_{Cl} F_2(0) = \left\{
    \left( \left( \begin{smallmatrix} t & t \\ t & t \end{smallmatrix} \right), 
    \left( \begin{smallmatrix} s & -s \\ -s & s \end{smallmatrix} \right) \right) \in \mathbb{S}^2 \times \mathbb{S}^2
    \Bigm| t, s \in [-1, 1]
  \right\}
\]
Any pair of matrices from this set is a subgradient (in the sense of convex analysis) of $F$ at zero.
\end{example}

\bibliographystyle{abbrv}  
\bibliography{MatrixSubdifferential_bibl}

\begin{thebibliography}{10}

\bibitem{AubinFrankowska}
J.-P. Aubin and H.~Frankowska.
\newblock {\em Set-Valued Analysis}.
\newblock Birkh\"{a}user, Boston, 1990.

\bibitem{BrinkhuisLuoZhang}
J.~Brinkhuis, Z.-Q. Luo, and S.~Zhang.
\newblock Matrix convex functions with applications to weighted centers for
  semidefinite programming.
\newblock Technical Report El 2005-38, Econometric Institute, Erasmus
  University Rotterdam, 2005.
\newblock Available at: https://repub.eur.nl/pub/7025.

\bibitem{ChenQiTseng}
X.~Chen, H.~Qi, and P.~Tseng.
\newblock Analysis of nonsmooth symmetric-matrix-valued function with
  applications to semidefinite complementarity problems.
\newblock {\em SIAM J. Optim.}, 13:960--985, 2003.

\bibitem{Clarke}
F.~H. Clarke.
\newblock {\em Optimization and Nonsmooth Analysis}.
\newblock Wiley--Interscience, New York, 1983.

\bibitem{Dolgopolik_DC_Semidef_I}
M.~V. Dolgopolik.
\newblock {D}{C} semidefinite programming and cone constrained {D}{C}
  optimization {I}: theory.
\newblock {\em Comput. Optim. Appl.}, 82:649--671, 2022.

\bibitem{Dolgopolik_DC_Semidef_II}
M.~V. Dolgopolik.
\newblock {D}{C} semidefinite programming and cone constrained {D}{C}
  optimization {I}{I}: local search methods.
\newblock {\em Comput. Optim. Appl.}, 85:993--1031, 2023.

\bibitem{HansenTomiyama}
F.~Hansen and J.~Tomiyama.
\newblock Differential analysis of matrix convex functions.
\newblock {\em Linear Algebra Appl.}, 420:102--116, 2007.

\bibitem{HornJohnson}
R.~A. Horn and C.~R. Johnson.
\newblock {\em Matrix Analysis}.
\newblock Cambridge University Press, New York, 2013.

\bibitem{Ioffe2012}
A.~D. Ioffe.
\newblock On the theory of subdifferentials.
\newblock {\em Adv. Nonlinear Anal.}, 1:47--120, 2012.

\bibitem{Kadison}
R.~V. Kadison.
\newblock Order properties of bounded self-adjoint operators.
\newblock {\em Proc. Amer. Math. Soc.}, 2:505--510, 1951.

\bibitem{KusraevKutateladze}
A.~G. Kusraev and S.~S. Kutateladze.
\newblock {\em Subdifferentials: Theory and Applications}.
\newblock Kluwer Academic Publishers, Dordrecht, 1995.

\bibitem{Mordukhovich_I}
B.~S. Mordukhovich.
\newblock {\em Variational Analysis and Generalized Differentiation {I}: Basic
  Theory}.
\newblock Springer-Verlag, Berling, Heidelberg, 2006.

\bibitem{Papageorgiou}
N.~S. Papageorgiou.
\newblock Nonsmooth analysis on partially ordered vectors spaces: part 1 ---
  convex case.
\newblock {\em Pac. J. Math.}, 107:403--458, 1983.

\bibitem{Penot_book}
J.-P. Penot.
\newblock {\em Calculus Without Derivatives}.
\newblock Springer Science+Business Media, New York, 2013.

\bibitem{Robinson76}
S.~M. Robinson.
\newblock Regularity and stability for convex multivalued functions.
\newblock {\em Math. Oper. Res.}, 1:130--143, 1976.

\bibitem{Rockafellar}
R.~T. Rockafellar.
\newblock {\em Convex Analysis}.
\newblock Princeton University Press, Princeton, 1970.

\bibitem{SunSun}
D.~Sun and J.~Sun.
\newblock Semismooth matrix-valued functions.
\newblock {\em Math. Oper. Res.}, 27:150--169, 2002.

\bibitem{Thera}
M.~Th\'{e}ra.
\newblock Subdifferential calculus for convex operators.
\newblock {\em J. Math. Anal. Appl.}, 80:78--91, 1981.

\end{thebibliography}

\end{document}